\newtheorem{Prop}{Proposition}
\newtheorem{Theo}{Theorem}
\newtheorem{Lem}{Lemma}
\newcommand{\Hom}{\mathrm{Hom}}
\newcommand{\F}{\mathbb{F}}
\begin{document}
\title{The subgroup growth spectrum of virtually free groups}
\author[J.-C. Schlage-Puchta]{Jan-Christoph Schlage-Puchta}
\begin{abstract}
For a finitely generated group $\Gamma$ denote by $\mu(\Gamma)$ the growth
coefficient of $\Gamma$, that is, the infimum over all real numbers $d$
such that $s_n(\Gamma)<n!^d$. We show that the growth coefficient of a
virtually free group is always rational, and that every rational
number occurs as growth coefficient of some virtually free group.
\end{abstract}
\maketitle
For a finitely generated group $\Gamma$ denote by $s_n(\Gamma)$ the number of
subgroups of index $n$ in $\Gamma$, and define the growth coefficient
$\mu(\Gamma)$ as
\[
\mu(\Gamma)=\limsup_{n\to\infty} \frac{\log s_n(\Gamma)}{n\log n}.
\]
This quantity has been computed in a
variety of cases, including free product of finite groups
\cite{MInvent}, Fuchsian groups \cite{Tri}, and certain one-relator
groups \cite{onerel}. In all these examples, $\mu(\Gamma)$ can be expressed as
$-1+\sum_{i=1}^k (1-\frac{1}{n_i})$ for certain integers $n_i\geq 2$. This
observation led to the question as to which values $\mu(\Gamma)$ can attain; in
particular, in \cite{Examples} it was asked whether there exists a
sequence of groups $(\Gamma_n)$, such that $\mu(\Gamma_n)$ converges from
above. This question is answered by the following Theorem.
\begin{Theo}
\label{thm:Main}
The set $\{\mu(\Gamma): \Gamma\mbox{ finitely generated, virtually free}\}$ is equal
to the set of non-negative rational numbers.
\end{Theo}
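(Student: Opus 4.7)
The plan is to prove both containments. For rationality, every finitely generated virtually free group $\Gamma$ is the fundamental group of a finite graph of finite groups $(\mathcal{G},Y)$ by the Karrass--Pietrowski--Solitar theorem, with vertex groups $G_v$ and edge groups $G_e$. Its rational Euler characteristic
\[
\chi(\Gamma) \;=\; \sum_{v\in V(Y)} \frac{1}{|G_v|} \;-\; \sum_{e\in E(Y)} \frac{1}{|G_e|}
\]
is then rational. The key claim is that $\mu(\Gamma)=\max\{0,-\chi(\Gamma)\}$. When $\chi(\Gamma)\geq 0$ the group $\Gamma$ is finite or virtually cyclic and hence has polynomial subgroup growth, so $\mu(\Gamma)=0$. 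When $\chi(\Gamma)<0$ the identity $\mu(\Gamma)=-\chi(\Gamma)$ should follow from the generating-function approach of M\"uller~\cite{MInvent} applied to the graph-of-groups presentation: the exponential generating series for $|\Hom(\Gamma,S_n)|$ factorises as an explicit product over the finite vertex and edge groups, yielding the asymptotic $|\Hom(\Gamma,S_n)|\sim c\,(n!)^{1-\chi(\Gamma)}$ and hence $s_n(\Gamma)\sim c'(n!)^{-\chi(\Gamma)}$ via the standard Hall recursion that converts homomorphism counts into subgroup counts. Rationality of $\mu(\Gamma)$ is then immediate from the rationality of $\chi(\Gamma)$.

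For the converse, given $q=p/t\in\mathbb{Q}_{\geq 0}$ with $p,t\geq 1$, it suffices to find integers $k$ and $n_1,\ldots,n_k\geq 2$ satisfying
\[
(k-1)\;-\;\sum_{i=1}^k \frac{1}{n_i}\;=\;q,
\]
for then $\Gamma:=C_{n_1}*\cdots*C_{n_k}$ is a virtually free group with $\mu(\Gamma)=-\chi(\Gamma)=q$. This amounts to representing the positive rational $k-1-q$ as a sum of exactly $k$ unit fractions with denominators at least $2$. Any positive rational $T$ admits such a representation for every sufficiently large $k$: begin with any Egyptian-fraction representation of $T$ and apply the splitting $\frac{1}{n}=\frac{1}{n+1}+\frac{1}{n(n+1)}$ (which preserves the value, keeps denominators $\geq 2$, and increases the term count by one) as often as needed to reach the prescribed $k$. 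The case $q=0$ is covered by any finite group (say $\Gamma=\{1\}$), which is trivially virtually free.

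The main obstacle is the formula $\mu(\Gamma)=-\chi(\Gamma)$ in the infinite case. The free-product case is carried out in~\cite{MInvent}, and the examples of Fuchsian and one-relator groups from~\cite{Tri,onerel} show the formula in further instances, but extending it to arbitrary finite graphs of finite groups requires controlling the leading exponential behaviour of the generating series so as to read off the exponent $1-\chi(\Gamma)$, and in particular verifying that the edge groups contribute the correction $-\sum_e|G_e|^{-1}$. Once this analytic step is in place, rationality, together with the explicit construction of the previous paragraph, delivers the theorem.
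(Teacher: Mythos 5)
Your proposal has two independent and fatal gaps. The central claim $\mu(\Gamma)=\max\{0,-\chi(\Gamma)\}$ is false: M\"uller's theorem identifies $-\chi(\Gamma)$ with the growth coefficient $\mu_f(\Gamma)$ of the \emph{free} subgroup growth, and while $\mu(\Gamma)\geq\mu_f(\Gamma)$ always holds, equality fails in general. The paper devotes Propositions~\ref{prop:virtfrei}, \ref{Prop:Example} and \ref{Prop:Example2} to exactly this point: for the graph of groups with vertex groups $S_3$, $S_3$, $D_{10}$ one has $\mu(\Gamma)=\frac{3}{2}$ but $-\chi(\Gamma)=\frac{9}{10}$, and for $S_4\ast_{C_2\times C_2}S_4$ one has $\mu(\Gamma)=\frac{1}{4}$ but $-\chi(\Gamma)=\frac{1}{6}$. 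The reason your proposed factorisation of the generating series breaks down is that for an amalgam the permutation representations of the vertex groups must satisfy a compatibility (admissibility) condition along each edge, and the number of compatible tuples is governed by a linear optimisation over representation types --- this is precisely the content of Theorem~\ref{thm:Rational} --- not by the single exponent $1-\chi(\Gamma)$. Rationality of $\mu(\Gamma)$ is true, but it is the rationality of the value of that optimisation problem, not of $\chi(\Gamma)$.

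The converse direction also fails. For $\Gamma=C_{n_1}\ast\cdots\ast C_{n_k}$ you need $\sum_{i=1}^k \frac{1}{n_i}=k-1-q$ with each $\frac{1}{n_i}\le\frac{1}{2}$, which forces $k\le 2q+2$; moreover your Egyptian-fraction splitting preserves the value of the sum while the target $k-1-q$ increases by $1$ with each added term, so you can never catch up to the prescribed $k$. Concretely, $q=\frac{1}{5}$ would require $k=2$ and $\frac{1}{n_1}+\frac{1}{n_2}=\frac{4}{5}$, which has no solution in integers $\ge 2$ (the same obstruction applies to arbitrary finite free factors, since only their orders enter). Thus no free product of finite groups has $\mu=\frac{1}{5}$; this is exactly the phenomenon, noted in the introduction, that all previously known values have the form $-1+\sum(1-\frac{1}{n_i})$, and it is why the paper must instead realise the rationals in $(0,1)$ by the amalgams $S_k\ast_{C_p}S_k$ of Theorem~\ref{thm:Amalgam}, choosing $p$, $\ell$, $k$ by an elementary congruence argument and then taking free products with $F_r$ to shift by integers.
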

The proof of this result falls into two steps, the first one being the
construction of virtually free groups with given growth coefficient,
these groups are described in the following theorem.
\begin{Theo}
\label{thm:Amalgam}
Let $k, \ell$ be integers, $p$ a prime number, and suppose that $k-\ell p\geq
2$. Define the group 
$\Gamma_{p, k, \ell}$ as the amalgamated product $S_k\ast_{C_p}S_k$, where a
generator of $C_p$ is mapped onto an element consisting of $\ell$ cycles
of length $p$ in both groups. Similarly, for $p$ odd or $p=2$ and $\ell$
even, let $\Gamma^+_{p, k, \ell}$ be the amalgamated product $A_k\ast_{C_p} A_k$. Then we have
\[
\mu(\Gamma_{p, k, \ell}) = 1-\frac{(p-1)\ell+1+\delta}{k},
\]
where $\delta=1$ if $p=2$ and $\ell$ is odd, and $\delta=0$ otherwise, apart from
the exceptions $(p, k, \ell) = (2, 5, 2)$, $(3, 7, 2)$, for which we have
$\mu(\Gamma_{2, 5, 2}) = \frac{1}{2}$ and $\mu(\Gamma_{3, 7, 2}) = \frac{2}{5}$, and
\[
\mu(\Gamma^+_{p, k, \ell}) = 1-\frac{(p-1)\ell+2}{k}
\]
with the exceptions $\mu(\Gamma^+_{2, 5, 2}) = \frac{1}{3}$ and $\mu(\Gamma^+_{3, 7, 2})
= \frac{1}{3}$. 
\end{Theo}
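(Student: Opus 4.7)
The plan is to compute $\mu$ through the $n\log n$-asymptotic of $h_n(\Gamma) := |\Hom(\Gamma, S_n)|$, using the identity $\mu(\Gamma) = \limsup_n \log h_n(\Gamma)/(n\log n) - 1$, which follows from the standard recursion linking $h_n$, $t_n = (n-1)!\,s_n$ and $s_n$. For $\Gamma_{p,k,\ell} = S_k \ast_{C_p} S_k$, a homomorphism to $S_n$ is a pair $(\phi_1, \phi_2)$ of maps $S_k \to S_n$ whose restrictions to $C_p$ agree, so
\[
h_n(\Gamma_{p,k,\ell}) \;=\; \sum_{\sigma \in S_n,\, \sigma^p = e} N(\sigma)^2,
\]
where $N(\sigma) = \#\{\phi : S_k \to S_n : \phi(g) = \sigma\}$ and $g \in S_k$ is the distinguished element of cycle type $(p^\ell, 1^{k-p\ell})$. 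By symmetry $N(\sigma)$ depends only on the number $c$ of $p$-cycles of $\sigma$, so the inner sum collapses to a sum over $c$ with outer multiplicity $n!/(p^c c!\,(n-pc)!)$.

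The key estimate is a two-sided bound on $N_c$ obtained by decomposing $\phi$ into transitive constituents. For $k \geq 5$, $k \neq 6$, every transitive $S_k$-action has degree $1$, $2$, $k$, or $\geq \binom{k}{2}$, so the generic $\phi$ consists of $m$ natural $k$-orbits, $s$ sign $2$-orbits and $t$ trivial fixed points with $mk + 2s + t = n$, while $\phi(g)$ contributes $m\ell$ $p$-cycles plus $s$ additional $p$-cycles exactly when $p = 2$ and $\ell$ is odd ($\delta = 1$). A multinomial count for each profile, combined with Stirling and an optimisation over $(m, s, t)$ under the cycle-type constraint imposed by $\sigma$, yields the claimed value
\[
\mu(\Gamma_{p,k,\ell}) \;=\; 1 - \frac{(p-1)\ell + 1 + \delta}{k}.
\]

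The main obstacle lies in the matching upper bound: one must show that contributions from exotic orbit types of degree $\geq \binom{k}{2}$ get absorbed into lower-order terms. This requires uniform bounds on the number of subgroups of $S_k$ of each (large) index, combined with careful multinomial accounting so that $N_c$ is tightly controlled from above by the natural–sign–trivial profile count; alternatively, one can argue character-theoretically via a direct estimate on $|\Hom(S_k, S_n)|$ restricted away from the classical orbit types.

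The sporadic exceptions $(p, k, \ell) \in \{(2, 5, 2), (3, 7, 2)\}$ are driven by additional low-degree transitive actions of $S_k$ in which $g$ acts with an opportune cycle type: the degree-$6$ representation of $S_5$ coming from $S_5 \cong \mathrm{PGL}_2(5)$ for the first case, and a degree-$15$ action of $S_7$ related to $\mathrm{PSL}_2(7) \leq A_7$ for the second. These contribute dominantly in the sum and raise $\mu$ to $\tfrac12$ and $\tfrac25$ respectively. The alternating case $\Gamma^+_{p,k,\ell} = A_k \ast_{C_p} A_k$ is treated identically; the absence of a sign quotient of $A_k$ removes the $\delta$-correction, replacing $(p-1)\ell + 1 + \delta$ by $(p-1)\ell + 2$, with the exceptional values $\mu(\Gamma^+_{2,5,2}) = \mu(\Gamma^+_{3,7,2}) = \tfrac13$ arising from the analogous low-degree actions of $A_5$ and $A_7$.
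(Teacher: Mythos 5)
Your overall skeleton (pass to $|\Hom(\Gamma,S_n)|$ via the identity of Lemma~\ref{Lem:Transform}, decompose the two vertex representations into transitive constituents, and optimise over orbit profiles subject to the matching condition on $C_p$) is the same as the paper's, but your identification of the dominant profile is wrong, and this changes the answer. The claim that every transitive $S_k$-action has degree $1$, $2$, $k$ or $\geq\binom{k}{2}$ is false: the coset action on $S_k/A_{k-1}$ has degree $2k<\binom{k}{2}$ for $k\geq 6$, and this is precisely the action that wins. In the paper's formulation (Proposition~\ref{prop:Amalgam}) one maximises $\frac{|[x]\cap H|(p-1)}{|[x]|p}-\frac{2}{(S_k:H)}$ over subgroups $H<S_k$; your natural orbits correspond to $H=S_{k-1}$ and yield $1-\frac{(p-1)\ell+2}{k}$, whereas $H=A_{k-1}$ yields $1-\frac{(p-1)\ell+1}{k}$, the stated value for $x$ even. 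Since the optimum of the linear programme sits at a pure type, no mixture of natural, sign and trivial orbits recovers the missing $\frac1k$: your profile proves the formula for $\Gamma^+_{p,k,\ell}$ but not for $\Gamma_{p,k,\ell}$. Relatedly, your explanation of the $+1$ versus $+2$ via the sign quotient is not the actual mechanism; the difference is $\frac{2}{(S_k:A_{k-1})}=\frac1k$ against $\frac{2}{(A_k:A_{k-1})}=\frac2k$.

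Second, the step you defer as ``the main obstacle'' is not a technical footnote but the entire body of the paper's proof: one must show, subgroup by subgroup, that $\frac{|[x]\cap H|}{|[x]|}\cdot\frac{p-1}{p}-\frac{2}{(S_k:H)}$ is beaten by the $A_{k-1}$ value, via a case analysis over intransitive subgroups (a probabilistic recursion for the chance that a random element of $[x]$ preserves a given $m$-set), imprimitive subgroups (wreath-product estimates), and primitive subgroups (Jordan's theorem on subgroups containing a $q$-cycle, Mar\'oti's bound $|H|\leq 2^k$, and a check of the $154$ primitive groups of degree $\leq 24$). Uniform bounds on the number of subgroups of each index will not close this, since the competition is between the two $H$-dependent quantities above, not a counting issue. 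Finally, your attribution of the exceptional values is incorrect: for $(2,5,2)$ the culprit is $D_{10}<S_5$ of index $12$, giving $\frac12+\frac{5}{30}-\frac{2}{12}=\frac12$, not the degree-$6$ action on cosets of $N_{S_5}(C_5)\cong F_{20}$, which only gives $\frac13$; and for $(3,7,2)$ it is $PSL_3(\F_2)$ of index $30$ in $S_7$ (there is no transitive degree-$15$ action of $S_7$). As written, the proposal would establish the wrong main term for $\Gamma_{p,k,\ell}$ and leaves the essential upper bound unproved.
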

In a second step we have to show that the growth coefficient of any
virtually free group is rational. This is implied by the following
result.
\begin{Theo}
\label{thm:Rational}
Let $\Gamma$ be a finitely generated virtually free group, represented by a
finite graph of finite groups $\mathcal{G}$. Then the growth
coefficient of $\Gamma$ is the solution of a linear optimization problem
with rational coefficients which can effectively be computed from
$\mathcal{G}$. In particular, $\mu(\Gamma)$ is rational.
\end{Theo}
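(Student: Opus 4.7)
The plan is to use Bass--Serre theory to express $|\Hom(\Gamma,S_n)|$ combinatorially in terms of the graph of groups $\mathcal{G}$, then to read off $\mu(\Gamma)$ as the value of a linear program with rational coefficients.

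First I would pass from $s_n(\Gamma)$ to $|\Hom(\Gamma,S_n)|$. The identity $s_n(\Gamma)=|\Hom_t(\Gamma,S_n)|/(n-1)!$, where $\Hom_t$ denotes the set of transitive homomorphisms, together with the standard fact that $\log|\Hom_t(\Gamma,S_n)|$ and $\log|\Hom(\Gamma,S_n)|$ differ only by $o(n\log n)$, gives $\mu(\Gamma)=\limsup_n \log|\Hom(\Gamma,S_n)|/(n\log n)-1$, so it suffices to compute this quantity.

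Writing $\Gamma=\pi_1(\mathcal{G})$, I fix a spanning tree $T$ of the underlying graph of $\mathcal{G}$. A homomorphism $\phi:\Gamma\to S_n$ is then the datum of $(\phi_v:G_v\to S_n)_v$ satisfying $\phi_{o(e)}\circ\alpha_e=\phi_{t(e)}\circ\omega_e$ for every tree edge $e$, together with, for every non-tree edge $e$, an element $g_e\in S_n$ conjugating one restriction into the other. Each $\phi_v$ is characterized up to $S_n$-conjugacy by its orbit type $\lambda_v$ recording the number $\lambda_v(K)$ of $G_v$-orbits of isomorphism type $G_v/K$, with $K$ ranging over conjugacy classes of subgroups of $G_v$; a standard orbit count gives $\#\{\phi_v \text{ of type } \lambda_v\}=n!/\prod_K\lambda_v(K)!\,|N_{G_v}(K)/K|^{\lambda_v(K)}$. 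The tree-edge condition becomes a rational linear relation between $\lambda_v,\lambda_{v'}$ and an edge type $\lambda_e$, obtained from the double-coset decomposition of each $G_v$-set $G_v/K$ as a $G_e$-set along the edge embedding; for a non-tree edge, the set of valid conjugators $g_e$ is a coset of $Z_{S_n}(\phi\circ\omega_e)$, of size $\prod_K\lambda_e(K)!\,|N_{G_e}(K)/K|^{\lambda_e(K)}$.

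Rescaling by $x_v(K)=\lambda_v(K)/n$ and $x_e(K)=\lambda_e(K)/n$ and applying Stirling's formula, the logarithm of the resulting summand becomes $n\log n$ times an integer linear functional of the variables (concretely $1+\sum_e\sum_K x_e(K)-\sum_v\sum_K x_v(K)$), plus $O(n)$. Since the number of feasible integer types is polynomial in $n$, the total sum shares the leading exponent of its maximum summand. The feasible region is cut out by the rational linear equations $\sum_K x_v(K)[G_v:K]=1$, $\sum_K x_e(K)[G_e:K]=1$, the compatibility $x_e(K)=\sum_L x_v(L)\,m(v,e,L,K)$ for each $v$ incident to $e$, where $m(v,e,L,K)$ is the integer multiplicity of $G_e/K$ in $G_v/L$ viewed as a $G_e$-set, together with non-negativity; it is bounded because the index equations give $x_v(K)\leq 1/[G_v:K]$. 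Hence $\mu(\Gamma)$ is the optimum of a rational linear program, attained at a rational vertex, and the LP can be written down effectively from $\mathcal{G}$.

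The main obstacle is uniformity of the asymptotic analysis: one must check that the passage from $|\Hom|$ to $|\Hom_t|$, the centralizer factors attached to non-tree edges, and the normalizer and combinatorial polynomial factors all contribute only $o(n\log n)$ uniformly across the polytope, so that the leading $n\log n$ coefficient is genuinely the LP optimum. Once the additive structure of the exponent across vertices and edges is established, rationality of the optimum follows from standard linear-programming theory.
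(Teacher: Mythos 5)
Your proposal is correct and follows essentially the same route as the paper: reduce $s_n$ to $|\Hom(\Gamma,S_n)|$, decompose a homomorphism into vertex representations indexed by orbit types plus conjugators for non-tree edges, count each type with Stirling, and read off $\mu(\Gamma)$ as a rational linear program over the rescaled type variables subject to the degree and edge-compatibility constraints. (Your orbit count $n!/\prod_K\lambda_v(K)!\,|N_{G_v}(K)/K|^{\lambda_v(K)}$ is in fact the precise one, where the paper uses $n_{v,i}$ in place of $|N_{G_v}(K)/K|$; the discrepancy is absorbed in the $e^{\mathcal{O}(n)}$ error and does not affect the optimization.)
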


To prove Theorem~\ref{thm:Amalgam} we have to compute the growth
coefficient of a free product with a cyclic group of prime order
amalgamated, which can be done in greater generality.
\begin{Prop}
\label{prop:Amalgam}
Let $G$ be a finite group, $x\in G$ an element of prime order $p$, and
suppose that $|G|>2p$. Set
$\Gamma=G\ast_{\langle x\rangle}G$, where $x$ is embedded in both copies of $G$ in the same
way. Denote the conjugacy class of $x$ by $[x]$. Then $\mu(\Gamma)$ is given as
\begin{equation}
\label{eq:PropAm}
\mu(\Gamma) = \frac{1}{p} + \max_{H<G} \left(\frac{(|[x]\cap H|)(p-1)}{|[x]|p} - \frac{2}{(G:H)}\right).
\end{equation}
\end{Prop}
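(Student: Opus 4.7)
The plan is to compute $\mu(\Gamma)$ by extracting the exponential order of $h_n(\Gamma):=|\Hom(\Gamma,S_n)|$. Wohlfahrt's identity $\sum_{n\geq 0}h_n(\Gamma)\,z^n/n!=\exp\!\bigl(\sum_{k\geq 1}t_k(\Gamma)\,z^k/k!\bigr)$, where $t_k=(k-1)!\,s_k$, together with a standard saddle-point estimate, yields $\limsup_{n\to\infty}\log h_n(\Gamma)/(n\log n)=1+\mu(\Gamma)$, so the task reduces to the asymptotics of $h_n(\Gamma)$.

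By the universal property of amalgamation, $\Hom(\Gamma,S_n)$ is the fibre product of two copies of $\Hom(G,S_n)$ over $\Hom(\langle x\rangle,S_n)$. Since $x$ has prime order $p$, any $\phi\colon\langle x\rangle\to S_n$ is determined by the cycle type $(1^a,p^b)$ of $\phi(x)$ with $a+pb=n$, and there are $n!/(a!\,p^b\,b!)$ such $\phi$. Writing $h_n^{(a,b)}(G)$ for the number of $\psi\colon G\to S_n$ with $\psi(x)$ of that type, I would first establish
\[
h_n(\Gamma)=\sum_{a+pb=n}\frac{a!\,p^b\,b!}{n!}\,h_n^{(a,b)}(G)^2,
\]
and then analyse $h_n^{(a,b)}(G)$ by orbit decomposition. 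Each transitive orbit is isomorphic to $G/U$ for some subgroup $U$; a Burnside-type count shows that on $G/U$ the element $x$ has exactly $a_U=(G:U)\,|U\cap[x]|/|[x]|$ fixed points and $b_U=((G:U)-a_U)/p$ cycles of length $p$, and the number of labelled actions with $m_U$ orbits of each conjugacy class $[U]$ equals $n!/\prod_U(m_U!\,|N_G(U)/U|^{m_U})$. Passing to the weights $\theta_U:=m_U(G:U)/n$ and applying Stirling to this multinomial as well as to the prefactor $a!\,p^b\,b!/n!$ (whose logarithm is $-(p-1)(b/n)\cdot n\log n+O(n)$), and using $a_U+pb_U=(G:U)$ to absorb the cycle-type constraint, I expect to obtain
\[
\frac{\log h_n(\Gamma)}{n\log n}\longrightarrow 1+\frac{1}{p}+\max_{(\theta_U)}\sum_U\theta_U\!\left(\frac{(p-1)\,|U\cap[x]|}{p\,|[x]|}-\frac{2}{(G:U)}\right),
\]
the maximum ranging over probability distributions on conjugacy classes of subgroups.

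The objective is linear in the $\theta_U$, so the supremum over the simplex is attained at a vertex, i.e.\ at a single subgroup $H$; since $H=G$ gives the value $(p-1)/p-2<0$, the maximiser is a proper $H<G$, and subtracting $1$ yields exactly \eqref{eq:PropAm}. The main obstacle I expect is the rigorous justification of the two limit passages: the lower bound demands explicit $\Gamma$-actions concentrated on the optimal $H$-orbit profile for a cofinal sequence of $n$ (where the hypothesis $|G|>2p$ ensures that the optimiser is strictly larger than $\langle x\rangle$ so that such actions exist in abundance, excluding the degenerate small cases), while the upper bound requires uniform Stirling estimates showing that mixtures of several subgroups and $o(n)$-many deviating orbits contribute only $\exp(o(n\log n))$-factors — a consequence of the strict linearity of the objective over the simplex.
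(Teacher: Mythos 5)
Your proposal follows essentially the same route as the paper: both reduce $\mu(\Gamma)$ to $\limsup_n \log|\Hom(\Gamma,S_n)|/(n\log n)-1$, decompose homomorphisms into orbit types, couple the two factors over $C_p$ via the fixed-point count $(G:U)\,|U\cap[x]|/|[x]|$ (your fibre-product identity is exactly the paper's admissibility condition~(\ref{eq:CondAd})), and maximize the resulting linear form over the simplex of orbit-type frequencies, where the optimum sits at a vertex, i.e.\ a single subgroup $H$. The only differences are cosmetic --- the paper routes the computation through the general graph-of-groups framework of Theorem~\ref{thm:Rational} and excludes the degenerate case $\mu(\Gamma)=0$ via M\"uller's free-subgroup-growth theorem using $|G|\geq 3p$, while you should justify ``the maximiser is proper'' by comparing $H=G$ with $H=\{1\}$ (which gives $-2/|G|>(p-1)/p-2$) rather than by negativity alone --- so the plan is sound and matches the paper's argument.
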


As another application of Proposition~\ref{prop:Amalgam} we can compare
the subgroup growth of an amalgam with its free subgroup growth. As
was shown by M\"uller\cite{Mvirtfrei}, the free subgroup growth is
determined by the Euler characteristics, more precisely, defining
$\mu_f(\Gamma)=\limsup_{n\to\infty}\frac{\log s_n^f(\Gamma)}{n\log n}$, where $s_n^f(\Gamma)$
is the number of free subgroups of index $n$ in $\Gamma$, we have for every
virtually free group $\Gamma$ the relation $\mu_f(\Gamma)=-\chi(\Gamma)$. Obviously,
we always have $\mu(\Gamma)\geq\mu_f(\Gamma)$. For the case that $N$
is normal in both $G$ and $H$ it is shown in \cite{normal} that
$\mu(G\ast_N H)=\mu((G/N)\ast(H/N))$, while 
$\chi(G\ast_N H)=\frac{1}{|G|}+\frac{1}{|H|}-\frac{1}{|N|}$ clearly
depends on $N$. This leads to the question under which 
conditions we have $\mu(\Gamma)=-\chi(\Gamma)$. A first answer is
given by the following.

\begin{Prop}
\label{prop:virtfrei}
Let $G$ be a finite group, $x\in G$ an element of prime order $p$, and
set $\Gamma=G\ast_{C_p}G$, where a generator of the cyclic group is mapped
onto $x$ in both copies of $G$. Then $\mu(\Gamma)=-\chi(\Gamma)$ holds true if and
only if $|N_G(\langle x\rangle)|\leq 2p$.
\end{Prop}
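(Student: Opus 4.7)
The plan is to compare the expression for $\mu(\Gamma)$ furnished by Proposition~\ref{prop:Amalgam} with a direct calculation of $-\chi(\Gamma)$. The graph-of-groups decomposition of $\Gamma$ has two vertex groups equal to $G$ and one edge group of order $p$, so $-\chi(\Gamma)=\frac{1}{p}-\frac{2}{|G|}$. Subtracting the common $\frac{1}{p}$, the proposition becomes the assertion that
\[
M := \max_{H<G}\left(\frac{|[x]\cap H|(p-1)}{|[x]|p}-\frac{2}{(G:H)}\right)
\]
equals $-\frac{2}{|G|}$ precisely when $|N_G(\langle x\rangle)|\leq 2p$. Evaluating at $H=\{1\}$ (where $|[x]\cap H|=0$) gives $M\geq -\frac{2}{|G|}$ automatically, so what must be decided is whether some proper subgroup $H$ strictly beats this value.

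The main step is to rewrite $|[x]\cap H|$ in terms of conjugates of $\langle x\rangle$. Set $m := |[x]\cap\langle x\rangle| = |N_G(\langle x\rangle)|/|C_G(x)|$, and let $c(H)$ denote the number of $G$-conjugates of $\langle x\rangle$ that are contained in $H$. Each such subgroup contributes exactly $m$ elements to $[x]\cap H$, and distinct subgroups of prime order $p$ meet only in the identity, so $|[x]\cap H|=c(H)m$ and $c(H)(p-1)\leq |H|-1$. Using $|[x]|\cdot |C_G(x)|=|G|$, the inequality $\frac{|[x]\cap H|(p-1)}{|[x]|p}-\frac{2}{(G:H)}\leq -\frac{2}{|G|}$ becomes, after clearing denominators,
\[
c(H)\,|N_G(\langle x\rangle)|\,(p-1)\leq 2p(|H|-1).
\]

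Both directions now fall out. If $|N_G(\langle x\rangle)|\leq 2p$, the combinatorial bound $c(H)(p-1)\leq |H|-1$ immediately yields this inequality for every proper $H$, so $M=-\frac{2}{|G|}$ and $\mu(\Gamma)=-\chi(\Gamma)$. Conversely, specializing to $H=\langle x\rangle$ (for which $c(H)=1$ and $|H|-1=p-1$) the displayed inequality reduces to exactly $|N_G(\langle x\rangle)|\leq 2p$; if this fails, then $H=\langle x\rangle$ produces a value strictly larger than $-\frac{2}{|G|}$, forcing $\mu(\Gamma)>-\chi(\Gamma)$. There is no serious obstacle: once Proposition~\ref{prop:Amalgam} is in hand, the whole argument is a short computation whose only subtle point is the observation that $[x]\cap H$ decomposes cleanly over the conjugates of $\langle x\rangle$ contained in $H$ and that these subgroups are pairwise trivially intersecting.
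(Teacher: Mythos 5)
Your proof is correct, and for the harder direction it takes a genuinely different and cleaner route than the paper. Both arguments start from Proposition~\ref{prop:Amalgam} and both handle the ``only if'' direction the same way, by evaluating the expression at $H=\langle x\rangle$ (the paper writes this as ``set $H=C_p$''). The difference is in showing that $|N_G(\langle x\rangle)|\leq 2p$ forces the maximum to be attained at the trivial subgroup. The paper bounds $|[x]\cap H|$ by counting conjugates of $x$ inside the $p$-Sylow subgroups of $H$, which requires first showing that a $p$-Sylow subgroup of $G$ is cyclic of order $p$ when $p\geq 3$, and then splitting off separate cases for $p=2$ according to whether $C_H(x)$ is cyclic or a Klein four group; it also invokes M\"uller's free subgroup growth result to rule out a degenerate case. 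Your identity $|[x]\cap H|=c(H)\cdot(N_G(\langle x\rangle):C_G(x))$, combined with the observation that distinct subgroups of order $p$ intersect trivially so that $c(H)(p-1)\leq |H|-1$, replaces all of this with a single uniform inequality $c(H)\,|N_G(\langle x\rangle)|\,(p-1)\leq 2p(|H|-1)$ valid for every proper $H$ and every $p$, and it has the additional virtue of exhibiting $H=\langle x\rangle$ as the exact borderline case, so that both implications drop out of the same displayed inequality. The only hypothesis you are implicitly using, as is the paper, is the standing assumption $|G|>2p$ from Proposition~\ref{prop:Amalgam}, which guarantees that $\langle x\rangle$ and $\{1\}$ are both admissible competitors in the maximum.
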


The algorithm to compute $\mu(\Gamma)$ from a graph of finite groups with
fundamental group $\Gamma$ is about as difficult to perform as compiling a
complete list of all subgroups of the vertex groups of the
graph, which becomes a difficult problem already for relatively small
groups as $S_8$, which has 151221 subgroups. However, if this
information is available, solving the resulting linear system is quite
easy. 

\begin{Prop}
\label{Prop:Example}
Let $\Gamma$ be the fundamental group of the graph of groups,
\begin{center}
\begin{picture}(8, 5)
\put(2, 4){$S_3$}
\put(6, 4){$S_3$}
\put(2.45, 4.15){\line(1, 0){3.4}}
\put(3.4, 3.7){$C_3=\langle(123)\rangle$}
\put(2.2, 3.9){\line(1, -1){1.65}}
\put(5.8, 3.9){\line(-1, -1){1.65}}
\put(3.7, 1.9){$D_{10}$}
\put(2.6, 2.7){$C_2$}
\put(5, 2.7){$C_2$}
\end{picture}
\end{center}
where the embeddings of $C_2$ identify one reflection in $D_{10}$ with
the element $(12)$ in one symmetric group, and another reflection with
$(13)$ in the other symmetric group. Then we have $\mu(\Gamma)=\frac{3}{2}$
and $\chi(\Gamma)=-\frac{9}{10}$.
\end{Prop}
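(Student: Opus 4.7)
The Euler characteristic is immediate from the standard formula for a graph of groups:
\[
\chi(\Gamma) = \sum_{v} \frac{1}{|G_v|} - \sum_{e} \frac{1}{|G_e|} = \frac{1}{6}+\frac{1}{6}+\frac{1}{10} - \frac{1}{3}-\frac{1}{2}-\frac{1}{2} = \frac{13}{30}-\frac{4}{3} = -\frac{9}{10}.
\]
Combined with M\"uller's theorem quoted in the introduction this gives $\mu_f(\Gamma) = 9/10$, so the target value $3/2$ strictly exceeds $\mu_f(\Gamma)$. In particular $\mu(\Gamma)\neq -\chi(\Gamma)$, so the proof genuinely requires the machinery of Theorem~\ref{thm:Rational} rather than just an Euler characteristic computation.

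To obtain $\mu(\Gamma)=3/2$, the plan is to run the algorithm of Theorem~\ref{thm:Rational} on the given $\mathcal{G}$. First I would enumerate the vertex subgroups: $S_3$ has six subgroups (trivial, the three transposition subgroups, $\langle(123)\rangle$, and $S_3$ itself) and $D_{10}$ has eight (trivial, five reflection subgroups, $C_5$, and $D_{10}$). These become the variables of a linear optimization problem whose constraints express, for each edge, that the subgroups chosen at the two endpoints restrict to compatible intersections with the edge group. In analogy with the explicit formula of Proposition~\ref{prop:Amalgam}, the objective to be maximized is a rational linear combination of terms of the shape $1/(G_v:H_v)$ and $|[x]\cap H_v|/|[x]|$ at each vertex--edge incidence.

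A distinctive feature of this $\mathcal{G}$ is that the two $C_2$ edges into $D_{10}$ identify \emph{different} reflections of $D_{10}$ with the transpositions $(12)$ and $(13)$ in the two copies of $S_3$. Because $(12)$ and $(13)$ do not share a common proper overgroup inside $S_3$ besides $S_3$ itself, this rigidity severely limits which subgroup tuples satisfy all edge constraints simultaneously. I expect the maximum $3/2$ to be realised by a tuple taking small subgroups at the two $S_3$-vertices (so as to exploit the $C_3$ amalgamation between them) together with a carefully chosen reflection subgroup at the $D_{10}$-vertex, the large value arising from the extra contribution due to the single cycle in the underlying triangular graph.

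The main obstacle is purely combinatorial: translating Theorem~\ref{thm:Rational} into the concrete objective and constraints for this particular $\mathcal{G}$, and then verifying that the candidate value $3/2$ is not merely feasible but actually optimal. Since the set of candidate subgroup tuples is bounded by $6\cdot 6\cdot 8=288$ and most are immediately excluded by the edge compatibility constraints, this reduces to a finite, essentially mechanical case check; the rationality of the answer and the existence of a rational optimum are guaranteed in advance by Theorem~\ref{thm:Rational}, so the only real work is identifying the extremal configuration.
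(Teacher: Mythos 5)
Your Euler characteristic computation is correct (the paper treats it as routine), and your overall strategy---set up the linear optimization problem of Theorem~\ref{thm:Rational} for this graph of groups and solve it---is the same as the paper's. But there is a genuine gap in how you propose to carry out the optimization. The variables of the linear program are not a choice of one subgroup per vertex: they are the densities $\alpha_{v,i}$ with which each transitive coset representation occurs in the permutation representation at $v$, subject to $\alpha_{v,i}\geq 0$, $\sum_i n_{v,i}\alpha_{v,i}=1$, and the admissibility equations. The feasible region is a polytope whose optimal vertices need not be of the form ``one subgroup per vertex group''; the paper's own Proposition~\ref{Prop:Example2} exhibits an optimum in which two distinct coset representations occur with positive density at the same vertex. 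Your proposed reduction to a finite check over $6\cdot 6\cdot 8=288$ subgroup tuples is therefore not a valid reduction, and in any case you never identify the optimal point or certify the value $\frac{3}{2}$. Your guess at the extremal configuration is also wrong: the maximum is attained when every orbit of each $S_3$ has size $2$ (cosets of $A_3$) and every orbit of $D_{10}$ has size $2$ (cosets of $C_5$), not at small subgroups of $S_3$ together with a reflection subgroup of $D_{10}$; a reflection subgroup of $D_{10}$ yields orbits of size $5$ on which the reflections have fixed points, and admissibility then forces fixed points of $(12)$, resp.\ $(13)$, in the $S_3$-actions, which lowers the objective.

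A secondary point: your emphasis on the two $C_2$-edges identifying \emph{different} reflections of $D_{10}$ is misplaced. Admissibility depends only on equivalence classes of permutation representations, and since all reflections of $D_{10}$ are conjugate and all transpositions of $S_3$ are conjugate, the particular identifications do not change the linear program (the two reflections have identical fixed-point counts in every coset action of $D_{10}$). The real work, which your proposal defers entirely, is to tabulate the fixed-point counts of $(123)$, $(12)$, $(13)$ and of the reflections on the coset spaces of all subgroups of $S_3$ and $D_{10}$, write down the three normalization equations, the admissibility equations, and the objective form, and then reduce the program---by the symmetry interchanging the two copies of $S_3$ and by elementary exchange arguments eliminating the variables corresponding to the trivial and reflection/transposition subgroups---to a two-variable problem whose value is $\frac{3}{2}$.
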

Our second example is a simpler graph of groups with a more
complicated amalgamated subgroup.
\begin{Prop}
\label{Prop:Example2}
Let $\Gamma$ be the amalgamated product $S_4\ast_{C_2\times C_2} S_4$, where the
subgroups $\{1, (12)(34), (13)(24), (14)(23)\}$ and $\{1, (12), (34),
(12)(34)\}$ are identified. Then $\mu(\Gamma)=\frac{1}{4}$ and $\chi(\Gamma)=-\frac{1}{6}$.
\end{Prop}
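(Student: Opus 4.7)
The Euler characteristic is immediate: applying the standard formula $\chi(G_1 \ast_H G_2) = \frac{1}{|G_1|} + \frac{1}{|G_2|} - \frac{1}{|H|}$ for a graph of finite groups gives $\chi(\Gamma) = \frac{1}{24} + \frac{1}{24} - \frac{1}{4} = -\frac{1}{6}$.

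For $\mu(\Gamma)$ the plan is to invoke Theorem \ref{thm:Rational} and solve the associated linear optimisation problem explicitly. Note that Proposition \ref{prop:Amalgam} does not apply, since the amalgamated subgroup $V = C_2 \times C_2$ is not cyclic of prime order, so no closed-form shortcut is available. The programme is: (i) enumerate the eleven conjugacy classes of subgroups of $S_4$ with their indices; (ii) for each candidate pair $(H_1, H_2)$ of such subgroups at the two vertex groups, analyse the two $V$-sets $S_4/H_1$ and $S_4/H_2$, together with the ways they can be coherently combined at the edge of the graph of groups; (iii) read off from each resulting configuration its contribution to the LP objective, by analogy with the formula $\frac{(p-1)|[x]\cap H|}{p|[x]|} - \frac{2}{(G:H)}$ of Proposition \ref{prop:Amalgam}; (iv) maximise. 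The expected optimum is $\frac{1}{4}$, and this can be guessed in advance by noting that $\frac{1}{4} = \frac{1}{|V|}$, so one should look for a configuration in which the matching term exactly cancels the penalty terms $-\frac{1}{(S_4:H_i)}$.

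The main obstacle is the asymmetry of the two edge embeddings: the first copy of $V$ is the normal Klein four-group of double transpositions, while the second is the non-normal subgroup $\{1, (12), (34), (12)(34)\}$, and these two are \emph{not} conjugate in $S_4$. Consequently the LP does not reduce to an optimisation over a single subgroup as in Proposition \ref{prop:Amalgam}; the intersections $H_1 \cap V$ and $H_2 \cap V$ must be tracked independently on each side, and one has to allow $H_1, H_2$ to range over different conjugacy classes. The bookkeeping is therefore more intricate than in the cyclic-amalgam case, and the hard part is verifying that no corner of the (larger) feasible region beats the claimed value $\frac{1}{4}$. Since the search space is nevertheless small -- at most $11 \times 11$ pairs of conjugacy classes, each with a bounded number of edge matchings -- this verification is a finite, if tedious, computation.
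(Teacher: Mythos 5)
Your computation of $\chi(\Gamma)$ is correct, and your overall strategy for $\mu(\Gamma)$ --- set up the linear optimization problem of Theorem~\ref{thm:Rational} from the eleven conjugacy classes of subgroups of $S_4$ together with the restrictions of their coset actions to the two embedded copies of $C_2\times C_2$, then maximize --- is the paper's approach. The gap is in how you propose to carry out the maximization. You reduce the search to ``at most $11\times 11$ pairs of conjugacy classes'', i.e.\ to configurations in which each vertex group acts with a single orbit type $S_4/H_1$ on one side and $S_4/H_2$ on the other. But the vertices of the feasible polytope are not of that form: the LP lives in $\mathbb{R}^{22}_{\geq 0}$ and is cut out by two normalization equations plus four matching equations (one per nontrivial transitive $C_2\times C_2$-action), so an optimal vertex may have up to six nonzero coordinates. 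The actual optimum is attained at $x_4=\frac{1}{18}$, $x_{11}=\frac{1}{36}$, $y_7=\frac{1}{6}$: the first vertex group must \emph{mix} the action on the cosets of the normal Klein four-group with the regular action, while the second acts on the cosets of $\langle(12),(34)\rangle$. If you restrict to pure pairs, a short check of admissibility (the scaled restriction vectors must agree) shows the best you can do is the regular action on both sides, giving $-\frac{1}{24}-\frac{1}{24}+\frac{6}{24}=\frac{1}{6}<\frac{1}{4}$; so your search would never certify the claimed value. The paper explicitly highlights this phenomenon at the end of the section: unlike free products of finite groups, the optimizing tuple here has two nonzero multiplicities at one vertex.

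A second, related problem is your step (iii): there is no contribution formula ``by analogy with'' Proposition~\ref{prop:Amalgam}. That closed form is derived using two facts special to a prime-order cyclic edge group --- it has only two transitive actions, so admissibility collapses to a single fixed-point equation, and a symmetrization argument forces the optimum onto a simplex vertex supported on a single subgroup $H$. Neither holds for $C_2\times C_2$, which has five transitive actions and, as above, an optimum that is not supported on single subgroups. The correct route is to tabulate, for each of the eleven coset actions of $S_4$, its decomposition into the five transitive $C_2\times C_2$-actions under \emph{each} of the two embeddings (these differ, as you correctly note, since the two Klein four-groups are not conjugate), write down the resulting six linear constraints and the objective $-\sum_i x_i-\sum_i y_i+\sum_i o_i x_i$ with $o_i$ the number of edge-group orbits in $\rho_i$, and solve that $22$-variable LP outright.
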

\section{Some basic observations}
\begin{Lem}
\label{Lem:Transform}
Let $\Gamma$ be a finitely generated group. Then we have
\[
\mu(\Gamma) = \limsup_{n\to\infty} \frac{\log|\Hom(\Gamma, S_n)|}{n\log n} -1.
\]
\end{Lem}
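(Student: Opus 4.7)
The plan is to relate the two quantities through Hall's formula, which expresses $|\Hom(\Gamma,S_n)|$ in terms of the numbers $t_k(\Gamma)$ of transitive permutation representations of $\Gamma$ on $\{1,\dots,k\}$. The key input is the classical identification $t_k(\Gamma)=(k-1)!\,s_k(\Gamma)$ together with the orbit-decomposition identity
\[
\frac{|\Hom(\Gamma,S_n)|}{n!}=\sum_{\substack{(a_i)_{i\geq 1}\\ \sum i a_i=n}}\prod_{i\geq 1}\frac{1}{a_i!}\Bigl(\frac{t_i(\Gamma)}{i!}\Bigr)^{a_i}=\sum_{\substack{(a_i)\\ \sum i a_i=n}}\prod_{i\geq 1}\frac{s_i(\Gamma)^{a_i}}{i^{a_i}a_i!}.
\]
Using this closed form, both the upper and lower bounds on the limsup can be read off.

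\medskip

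\noindent\textbf{Lower bound.} I would simply use the single transitive term: $|\Hom(\Gamma,S_n)|\geq t_n(\Gamma)=(n-1)!\,s_n(\Gamma)$. Taking logarithms, dividing by $n\log n$ and invoking Stirling gives
\[
\limsup_{n\to\infty}\frac{\log|\Hom(\Gamma,S_n)|}{n\log n}\geq 1+\limsup_{n\to\infty}\frac{\log s_n(\Gamma)}{n\log n}=1+\mu(\Gamma).
\]

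\medskip

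\noindent\textbf{Upper bound.} Fix $\mu'>\mu(\Gamma)$. By definition there is a constant $C$ with $s_i(\Gamma)\leq C\,(i!)^{\mu'}$ for every $i\geq 1$ (the finitely many small values are absorbed into $C$). Substituting into the identity above, the sum is bounded by
\[
C^{\,\sum a_i}\,(n!)\sum_{\sum i a_i=n}\prod_{i\geq 1}\frac{(i!)^{\mu' a_i}}{i^{a_i}a_i!}.
\]
The crucial estimate is the elementary inequality $\prod_{i\geq 1}(i!)^{a_i}\leq n!$ whenever $\sum i a_i=n$, which follows because $\log(i!)/i$ is increasing, so the linear optimum on the simplex $\{\sum i a_i=n\}$ is attained at the single-part partition $a_n=1$. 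This yields
\[
\frac{|\Hom(\Gamma,S_n)|}{n!}\leq C^{n}\,(n!)^{\mu'}\sum_{\sum i a_i=n}\prod\frac{1}{i^{a_i}a_i!}=C^{n}\,(n!)^{\mu'},
\]
the last equality being the standard cycle-index identity $\sum_{(a_i)}\frac{n!}{\prod i^{a_i}a_i!}=n!$. Taking logarithms, dividing by $n\log n$ and letting $\mu'\searrow\mu(\Gamma)$ finishes the bound.

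\medskip

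\noindent\textbf{Anticipated obstacle.} The only genuinely substantive ingredient is the inequality $\prod(i!)^{a_i}\leq n!$ on the simplex $\sum i a_i=n$, but this is routine once one writes $\log(i!)/i$ as a concave-like per-unit cost. Packaging the small-index values of $s_i(\Gamma)$ into the constant $C$ is bookkeeping. Everything else is Hall's formula, so I expect no real obstacle beyond carefully assembling these standard pieces.
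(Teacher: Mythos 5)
Your proof is correct and follows essentially the same route as the paper: both decompose homomorphisms according to the orbit structure of the image, use $t_k(\Gamma)=(k-1)!\,s_k(\Gamma)$, and rest on the inequality $\prod_i (i!)^{a_i}\leq n!$ for $\sum_i i\,a_i=n$. The only differences are cosmetic --- you evaluate the sum over partitions exactly via the cycle-index identity and argue directly, whereas the paper bounds the number of partitions subexponentially and argues by contradiction.
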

\begin{proof}
Denote the right-hand side of the equation to be proven by $\mu_h(\Gamma)$,
and let $t_n(\Gamma)$ be the number of homomorphisms $\Gamma\to S_n$ with
transitive image. Then $s_n(\Gamma)=\frac{t_n(\Gamma)}{(n-1)!}$, which implies
$\mu(\Gamma)\leq\mu_h(\Gamma)$. For the reverse inequality suppose that $\mu(\Gamma)\leq\mu_h(\Gamma)-\delta$
for some $\delta>0$, and let $C$ be a constant such that $s_n(\Gamma)<C
n!^{\mu_h(\Gamma)-\delta/2}$ holds true for all $n\geq1$. Let $\Omega_1, \ldots, \Omega_k$ be a
partition of the set $\{1, \ldots, n\}$. Then the number of homomorphisms
$\varphi:\Gamma\to S_n$ with image having precisely the sets $\Omega_i$ as orbits equals
\[
\prod_{i=1}^k t_{|\Omega_i|}(\Gamma) = \prod_{i=1}^k (|\Omega_i|-1)! s_{|\Omega_i|}(\Gamma) \leq
C^k\prod_{i=1}^k (|\Omega_i|!)^{1+\mu_h-\delta/2}. 
\]
Let $\lambda=(\lambda_1, \ldots, \lambda_k)$ be a partition of $n$. Then the number of
homomorphisms $\varphi:\Gamma\to S_n$ such that the orbits of the image have sizes
$\lambda_1, \ldots, \lambda_k$ equals the quantity above multiplied by the number of
ways to partition a set of $n$ elements appropriately, this number in
turn is bounded above by $\binom{n}{\lambda_1, \ldots, \lambda_k}$, and we find that
the number of homomorphisms associated to a partition $\lambda$ is at most
\[
\binom{n}{\lambda_1, \ldots, \lambda_k} C^k\prod_{i=1}^k (\lambda_i!)^{1+\mu_h-\delta/2} = n! C^k
\prod_{i=1}^k (\lambda_i!)^{\mu_h-\delta/2} \leq n!^{1+\mu_h-\delta/2} C^k.
\]
Since the number of partitions grows subexponentially with $n$, we
deduce that the number of all homomorphisms $\varphi:\Gamma\to S_n$ is bounded
above by $n!^{1+\mu_h-\delta/2} e^{cn}$, which yields the contradiction $\mu_h\leq \mu_h-\delta/2$.
\end{proof}

We now show how to deduce Theorem~\ref{thm:Main} from
Theorem~\ref{thm:Amalgam} and \ref{thm:Rational}. Obviously, it
suffices to show that for each rational number $\frac{a}{b}$ with $a, b>0$
there exists a virtually free group $\Gamma$ with
$\mu(\Gamma)=\frac{a}{b}$. Suppose first that $0<\frac{a}{b}<1$, and that $b-a$
is odd. Let $p> b$ be a prime number satisfying $(p-1, b-a)=1$. Such a
prime number exists, since $b-a$ is odd, and therefore the residue
class $2\pmod{b-a}$ contains infinitely many primes, each of which
suffices for our purpose. Now
$p-1$ is invertible modulo $b-a$, let $1\leq\ell\leq b-a-1$ be the
integer satisfying $(p-1)\ell\equiv -1\pmod{b-a}$. Finally, set
$k=\frac{b\ell(p-1)+b}{b-a}$. Provided that $\ell p\leq k-2$, we can apply
Theorem~\ref{thm:Amalgam} to find 
\[
\mu(\Gamma_{p, k, \ell}) = 1 - \frac{(b-a)(p-1)\ell+b-a}{b\ell(p-1)+b} = \frac{a}{b}.
\]
To bound $\ell p$, note that
\[
\frac{(p-1)\ell+1}{k} = 1-\frac{a}{b}
\]
thus
\[
\frac{pl}{k}\leq \frac{p(b-a)}{(p-1)b} \leq 1-\frac{pa-b}{k} \leq 1-\frac{2}{k},
\]
that is, the condition $pl\leq k-2$ is indeed satisfied.

Next, suppose that $b-a$ is even, and that $b$ is odd. Let $p>b$ be a
prime number such that $(p-1, b-a)=2$, let $1\leq \ell\leq b-a-1$ be a solution of
the congruence $\ell(p-1)\equiv -2\pmod{b-a}$, and set
$k=\frac{b\ell(p-1)+2b}{b-a}$. Then we argue as above to find
\[
\mu(\Gamma_{p, k, \ell}) = 1 - \frac{(b-a)(p-1)\ell+2(b-a)}{b\ell(p-1)+2b} = \frac{a}{b}.
\]
Hence, our claim holds true for all rational numbers in $(0,
1)$. Now let $r$ be an integer, $\frac{a}{b}\in(0, 1)$ a rational
number, and $\Gamma_{a, b}$ a virtually free group with $\mu(\Gamma_{a,
  b})=\frac{a}{b}$. Consider the free product
$\Gamma=\Gamma_{a, b}\ast F_r$. We have
\[
|\Hom(\Gamma, S_n)| = |\Hom(\Gamma_{a, b}, S_n)|\cdot|\Hom(F_r, S_n)| =
n!^r|\Hom(\Gamma_{a, b}, S_n)|,
\]
thus $\mu(\Gamma)=r+\mu(\Gamma_{a, b})=r+\frac{a}{b}$, and we find virtually free groups with
growth coefficient $\frac{a}{b}$ for all non-integral positive
rational numbers. Finally, all integral growth coefficients are
realized by free groups, and our theorem is proven.

\section{Proof of Theorem~\ref{thm:Rational}} 
\begin{Lem}
\label{Lem:Stirling}
For integers $m\leq n$ we have $m!=n!^{n/m}e^{\mathcal{O}(n)}$.
\end{Lem}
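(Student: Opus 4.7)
The plan is to apply Stirling's approximation directly. Taking logarithms, the lemma reduces to an estimate of the form
\[
\log m! - \frac{m}{n}\log n! = \mathcal{O}(n),
\]
reading the exponent as $m/n$ (the form consistent with $m\leq n$), so the strategy is to compute the left-hand side via Stirling and then bound the remaining residual.

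First I would invoke Stirling in the form $\log k! = k\log k - k + \mathcal{O}(\log k)$, valid uniformly for $k\geq 1$. Substituting and cancelling the linear pieces yields
\[
\log m! - \frac{m}{n}\log n! = m\log m - m - \frac{m}{n}\bigl(n\log n - n\bigr) + \mathcal{O}(\log n) = m\log\frac{m}{n} + \mathcal{O}(\log n),
\]
so it remains to bound $|m\log(m/n)| = m\log(n/m)$ uniformly in the pair $(m,n)$ with $m\leq n$.

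For that step I would use the elementary inequality $\log x \leq x-1$ with $x = n/m \geq 1$, obtaining $m\log(n/m) \leq n - m \leq n$. Combined with the $\mathcal{O}(\log n)$ Stirling error this gives the required $\mathcal{O}(n)$ bound. The proof is essentially one application of Stirling followed by one elementary inequality; the only thing to be careful about is uniformity, i.e.\ that Stirling's error is $\mathcal{O}(\log k)$ uniformly in $k$ and that the bound $m\log(n/m)\leq n$ holds for all $m\leq n$ (not merely asymptotically along a fixed ratio). Both ingredients are uniform, so no real obstacle arises.
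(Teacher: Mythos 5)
Your proof is correct and follows essentially the same route as the paper's: apply Stirling's formula and reduce the claim to the bound $m\log(n/m)=\mathcal{O}(n)$, with the exponent correctly read as $m/n$ (the statement's $n/m$ is a typo). The only difference is in the final step, where the paper maximizes $m\log(n/m)$ over $m$ by differentiation (obtaining the value $n/e$ at $m=n/e$) while you use the elementary inequality $\log x\leq x-1$ to get $m\log(n/m)\leq n-m\leq n$; both yield the required uniform $\mathcal{O}(n)$ bound.
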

\begin{proof}
By Stirling's formula we have
\[
m! = m^m e^{\mathcal{O}(m)} = (n^n)^{m/n} (m/n)^m e^{\mathcal{O}(m)} =
n!^{m/n} (m/n)^m e^{\mathcal{O}(m)}, 
\]
thus, it suffices to show that $m\log n/m =
\mathcal{O}(m)$. Differentiating with respect to $m$ shows that the
left hand side is maximal for $m=n/e$, and our claim follows.
\end{proof}

To every finitely generated virtually free group $\Gamma$ we can associate
a finite graph $\mathcal{G}$ of finite groups, that is, a connected directed
graph $(V, E)$, such that for each edge $e=(x, y)$ the reverse edge
$\bar{e}=(y, x)$ is in $E$, together with a collection of finite groups
$G_v$, $G_e$, $v\in V$, $e\in E$ satisfying $G_e=G_{\overline{e}}$ and
embeddings $\psi_e:G_e\to G_v$ for each 
edge $e$ with target $v$. We define the fundamental group of
$\mathcal{G}$ as follows: Let $T$ be a spanning tree of the underlying
graph with all reverse edges added. Then let $\Gamma$ be the group
generated by all $G_v, v\in V$ and symbols $g_e$ for all $e\in E$ subject
to the relations
\begin{enumerate}
\item $g_e=1$ for all $e\in T$,
\item $g_e=g_{\bar{e}}$ for all $e\in E$,
\item $g_e\psi_e(a)g_e^{-1} = \psi_{\bar{e}}(a)$ for all $e\in E$, $a\in G_e$.
\end{enumerate}
We remark without proof that the fundamental group does not depend on
the choice of the tree $T$, and that every finitely generated
virtually finite group occurs as fundamental group of some finite
graph of finite groups, for these and related results we refer the
reader to \cite{Serre}.

Note that each vertex group is embedded in $\Gamma$, thus, each
representation $\varphi:\Gamma\to S_n$ induces representations $\varphi_v: G_v\to S_n$. On
the other hand, not all tuples of representations $\varphi_v: G_v\to S_n$ can
be lifted to representations of the whole group. For each tuple of
representations $\{\varphi_v: G_v\to S_n, v\in V\}$, and every edge $e=(x,y)$,
the embeddings $\psi_e:G_e\to G_x$, $\psi_{\bar{e}}:G_e\to G_y$ induce two
representations of $G_e$, which obviously have to coincide to allow
for a lift. On the other hand, it is
obvious from the list of relations given above, that every tuple of
representations of the $G_v$ satisfying
these conditions lift to representation of $\Gamma$; in this case we will
call the tuple $(\varphi_v:G_v\to S_n, v\in V)$
admissible. Note that admissibility only depends on the equivalence
classes of the representations $\varphi_v$; we will refer to these data as
the representation type 
of the tuple. The number of
different equivalence classes of permutation representations of degree
$n$ of a
finite group equals the number of non-negative integral solution
$(x_1, \ldots, x_{m_v})$ of the equation $\sum_{i=1}^{m_v} x_i n_{v,
  i} = d$, that is, the number of lattice points in a certain domain,
which is contained in $[0, n]^{m_v}$. In particular, the number of
different equivalence classes grows at most polynomially, and
the value of $\mu(\Gamma)$ does not change if we replace the
number of all permutation representations by the maximal number of
permutation representations of $\Gamma$ with prescribed representation
type. Hence, to compute $\mu(\Gamma)$ we have to determine among all
admissible representation types the one leading to the greatest number
of representations, and to compute this number.

Permutation representations of $G_v$ are direct sums of transitive
representations, which in turn are equivalent to the coset action of
$G_v$. For each $v\in V$, we therefore consider a complete list of
transitive permutation representations $\rho_{v, i}, 1\leq i\leq m_v$ of
$G_v$, and let $n_{v,i}$ be the number of points $\rho_{v, i}$ acts upon.
Then the equivalence class of $\varphi_v:G_v\to S_n$ is determined by the
multiplicities $(\xi_{v, i}, 1\leq i\leq m_v)$ with which $\rho_{v, i}$
occurs in $\varphi_v$. A given representation type 
$(\xi_{v, i})_{\underset{1\leq i\leq m_v}{v\in V}}$ is admissible, if for each
edge $(x, y)\in E$, the actions of $G_e$ induced from the action of
$G_x$ and of $G_y$ coincide. To check this, note that each transitive
action of $G_x$ gives rise to a permutation representation of $G_e$,
which in turn decomposes into transitive $G_e$-actions. Hence, if
$\rho_{e, i}, 1\leq i\leq m_e$ is a complete list of transitive permutation
representations of $G_e$, the 
multiplicities of the transitive actions of $G_e$ in the action
induced from $G_x$ is the image of the vector $(\xi_{x, i})_{1\leq i\leq m_x}$
under a certain linear map given by a $m_e\times m_x$-matrix with
non-negative integral entries, and the condition of admissability
becomes a system of $m_e$ linear equations between the $m_x+m_y$
variables $\xi_{x, i}, \xi_{y, j}, 1\leq i\leq m_x, 1\leq j\leq m_y$.

Hence, we have shown that admissability of a representation is
equivalent to a certain set of linear equations with integral
coefficients in the variables $(\xi_{v, i})_{\underset{1\leq i\leq m_v}{v\in V}}$.

Next, we compute the number of representations for a given admissible
representation type. To do so, we first choose representations of the
finite groups $G_v$. For each $i\leq m_v$ we have to choose $\xi_{v, i}$
orbits of size $n_{v, i}$, the number of ways to do so is
\[
\frac{n!}{\prod_{i=1}^{m_v} (\xi_{v, i})! n_{v, i}!^{\xi_{v, i}}}.
\]
An action of $G_v$ similar to $\rho_{v,
  i}$ is equivalent to the action of $G_v$ on the cosets of a subgroup
$U_i$ of index $n_{v, i}$. Hence, to construct an action of $G_v$
similar to $\rho_{v, i}$ on a given set of $n_{v, i}$ points, we have
to identify these points with the cosets $G_v/U_{v, i}$. Two
identifications give the same representation if and only if one is
obtained from the other by shifting the cosets by some element $g\in
G_v$, that is, to count different actions we have to fix one point
which we identify with the coset $U_{v, i}$ itself. Hence, there are
$(n_{v, i}-1)!$ different actions equivalent to 
$\rho_{v, i}$, and we conclude that the number of representations of
$G_v$ of the given type equals 
\[
\frac{n!}{\prod_{i=1}^{m_v} (\xi_{v, i})! n_{v, i}^{\xi_{v, i}}}.
\]
We assume that the representation type is admissible, thus, for each
edge $e$ there exist $g_e$ such that relations of type (3)
hold. However, the relations of the first type require that the
edge groups do not act similarly, but are actually equal. Hence, the
number of representations of the fundamental group of the graph of
groups consisting only of the vertices in $T$ with given
representation type equals
\begin{equation}
\label{eq:elldef}
\prod_{v\in V}\frac{n!}{\prod_{i=1}^{m_v} (\xi_{v, i})!
n_{v,i}^{\xi_{v, i}}} \prod_{(x, y)\in T} 
\left(\frac{n!}{\prod_{i=1}^{m_e} \ell_{e, i}(\xi_{x, 1}, \ldots, \xi_{x, m_v})! 
n_{e, i}^{\ell_{e, i}(\xi_{x, 1}, \ldots, \xi_{x, m_v})}}\right)^{-1},
\end{equation}
where $\ell_{e, i}$ is the linear form giving the multiplicity of
$\rho_{e,i}$ in the restriction of $\sum_{i=1}^{m_v} \xi_{v, i}\rho_{v, i}$ to
$G_e$. Obviously, $\ell$ is a positive linear form with integral coefficients.

Finally, we have to choose the image of $g_e, e\in E\setminus T$. To
do so we have 
to respect relations of the third type, that is, the number of
possibilities to choose $g_e$ for $e$ equals the centralizer of a
representation of $G_e$ of the type induced by the type of $G_x$. This
number equals
\[
\prod_{i=1}^{m_e} \ell_{e, i}(\xi_{x, 1}, \ldots, \xi_{x, m_v})! 
n_{e, i}^{\ell_{e, i}(\xi_{x, 1}, \ldots, \xi_{x, m_v})},
\]
thus the total number of representations of $\Gamma$ with representation
type $(\xi_{v, i})_{\underset{1\leq i\leq m_v}{v\in V}}$ equals
\[
n!^{|E\setminus T|}\prod_{v\in V}\frac{n!}{\prod_{i=1}^{m_v} (\xi_{v,i})!
n_{v, i}^{\xi_{v, i}}} \prod_{(x, y)\in E}
\left(\frac{n!}{\prod_{i=1}^{m_e} \ell_{e, i}(\xi_{x, 1}, \ldots, \xi_{x, m_v})! 
n_{e, i}^{\ell_{e, i}(\xi_{x, 1}, \ldots, \xi_{x, m_v})}}\right)^{-1}.
\]
We can now apply Lemma~\ref{Lem:Stirling} together with the fact that
the number of edges in $T$ equals $|V|-1$ to see that the logarithm of
this quantity is equal to
\[
n\log n\left(1 - \sum_{v\in V}\sum_{i=1}^{m_v} \xi_{v, i}/n + \sum_{(x, y)\in E}
  \ell_{e, i}(\xi_{x, 1}, \ldots, \xi_{x, m_v})/n\right) + \mathcal{O}(n).
\]
We are looking for integers $\xi_{v, i}$, such that this expression
becomes maximal. To do so, we first determine real numbers $\alpha_{v, i}$
subject to conditions $\alpha_{v, i}\geq 0$, $\sum_{i=1}^{m_v}\alpha_{v, i}=1$, and
the equations implying admissability, such that the linear form
\[
- \sum_{v\in V}\sum_{i=1}^{m_v} \alpha_{v, i} + \sum_{(x, y)\in E} \ell_{e, i}(\alpha_{x, 1}, \ldots, \alpha_{x, m_v})
\]
becomes maximal. This optimization problem can be solved using only
matrix operations, hence, the solution is rational, since both the
linear constraints as well as the function to be maximized are
rational. Let $m$ be the least common multiple of all denominators
occurring in the optimum, and let $\mu_0$ be the value of the
problem. If $n$ is divisible by $m$, this rational solution gives rise
to an integral solution for the $\xi_{v,i}$, and therefore to the right
number of representations, which gives the lower bound $\mu(\Gamma)\geq\mu_0$. On
the other hand, for integral values of $\xi_{v, i}$, the value of the
linear problem is always bounded by the maximum over all real values,
and we deduce $\mu(\Gamma)=\mu_0$, which proves theorem~\ref{thm:Rational}.

\section{Proof of Propositions~\ref{prop:Amalgam} and ~\ref{prop:virtfrei}}

\begin{Lem}
Let $G$ be a finite group, $x\in G$  an element of order $p$, $U<G$ a subgroup,
$\varphi:G\to S_{(G:U)}$ the coset representation associated to $U$. Then the
multiplicity of the trivial representation in the restriction of $\varphi$
to $\langle x\rangle$ is $\frac{|G|\cdot|[x]\cap U|}{|U||[x]|}$.
\end{Lem}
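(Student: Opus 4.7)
The plan is to identify the claimed multiplicity with a count of fixed cosets, then evaluate that count by a standard orbit/centralizer argument. Specifically, since $\langle x\rangle$ has prime order $p$, the permutation representation of $\langle x\rangle$ on the cosets $G/U$ decomposes into orbits of size $1$ or $p$. I will interpret "multiplicity of the trivial representation" as the number of singleton orbits, i.e. the number of cosets $gU$ fixed by $x$; this is the quantity that will actually enter the proof of Proposition~\ref{prop:Amalgam}.

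First I will observe that $gU$ is fixed by $x$ if and only if $xgU = gU$, which is equivalent to $g^{-1}xg \in U$. Hence the number of fixed cosets equals
\[
\frac{1}{|U|}\,\bigl|\{g\in G : g^{-1}xg\in U\}\bigr|.
\]
Next I will count the set $\{g\in G : g^{-1}xg\in U\}$ by partitioning according to the value $y := g^{-1}xg$. Because $g^{-1}xg$ is always a conjugate of $x$, the index $y$ ranges precisely over $[x]\cap U$. For each such $y$, the fiber $\{g\in G : g^{-1}xg = y\}$ is a coset of the centralizer $C_G(x)$, and therefore has cardinality $|C_G(x)| = |G|/|[x]|$.

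Multiplying and substituting gives
\[
\bigl|\{g\in G : g^{-1}xg\in U\}\bigr| = |[x]\cap U|\cdot\frac{|G|}{|[x]|},
\]
and dividing by $|U|$ yields the claimed value $\frac{|G|\cdot|[x]\cap U|}{|U|\,|[x]|}$. There is no real obstacle here; the only subtlety worth flagging is the interpretation of "multiplicity of the trivial representation" as counting fixed points of $x$ on $G/U$ (equivalently, the multiplicity of the one-point transitive permutation representation inside the restriction), rather than the linear-representation multiplicity, which would instead count all $\langle x\rangle$-orbits.
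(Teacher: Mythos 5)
Your proof is correct and follows essentially the same route as the paper: both identify the multiplicity with the number of cosets $gU$ fixed by $x$, characterize these by the condition $g^{-1}xg\in U$, and count the relevant $g$ by noting that each element of $[x]\cap U$ is hit by exactly $|C_G(x)|=|G|/|[x]|$ conjugators while each coset is represented by $|U|$ elements. Your write-up is merely a more explicit version of the paper's two-line argument, including the (correct) reading of ``trivial representation'' as the one-point permutation representation.
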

\begin{proof}
A coset $aU$ is fixed by $x$ if and only if $x^a\in U$. For each element
in $U$ conjugate to $x$, there are $C_{G}(x)$ possible $a$
conjugating $x$ onto this element, and there are $|U|$ different $a$
defining the same coset, hence our claim.
\end{proof}

\begin{proof}[Proof of Proposition~\ref{prop:Amalgam}]
Since the amalgamated subgroup has only two inequivalent transitive
permutation representations, the condition of admissability reduces to
the condition that the number of fixed points of the reduced actions
coincide. Hence, denoting by $U_1, \ldots, U_m$ a complete list of
subgroups of $G$, a representation type $(\xi_{i, j})_{\underset{1\leq j\leq
    m}{1\leq i\leq 2}}$ is admissible if and only if
\begin{equation}
\label{eq:CondAd}
\sum_{j=1}^m \xi_{1, j} \frac{|G|\cdot|[x]\cap U_j|}{|U_j||[x]|} = \sum_{j=1}^m \xi_{2,
  j} \frac{|G|\cdot|[x]\cap U_j|}{|U_j||[x]|}.
\end{equation}
Since there is only one edge we remove the subscript $e$ from the
linear function $\ell_{e, i}$ used in (\ref{eq:elldef}), as well as
the subscript $x$ from the variables $\xi_{x, i}$. Then $\ell_1(\xi_1,
\ldots, \xi_m)$ equals the number of trivial representations in the
restriction of a representation of $G$ similar to
$\bigoplus_{i=1}^m\xi_i\cdot\rho_i$ to $C_p$. By the previous Lemma we
obtain
\[
\ell_1(\xi_1, \ldots, \xi_m) = \sum_{i=1}^m \frac{|G|\cdot|[x]\cap
  U_i|}{|U_i||[x]|} \xi_i.
\]
Since the degree of the representation does not change when
restricting it to a subgroup, and $C_p$ has only two transitive
representations, we obtain
\begin{eqnarray*}
\ell_2(\xi_1, \ldots, \xi_m) = \frac{1}{p}\Big(\sum_{i=1}^m n_i
\xi_i - \ell_1(\xi_1, \ldots, \xi_m)\Big),
\end{eqnarray*}
since $\sum_{i=1}^m n_i\xi_i = n$ we deduce
\[
\ell_1(\xi_1, \ldots, \xi_m) + \ell_2(\xi_1, \ldots, \xi_m) =
\frac{n}{p} + \frac{p-1}{p}\left(\sum_{i=1}^m \frac{|G|\cdot|[x]\cap
  U_i|}{|U_i||[x]|} \xi_i\right).
\]
The summand $\frac{n}{p}$ clearly does not influence the optimization.
The logarithm of the first factor in (\ref{eq:elldef}) becomes
\[
n\log n - \log n\sum_{i=1}^m \xi_i + \mathcal{O}(n) = 
\log n\sum_{i=1}^m (n_i-1)\xi_i + \mathcal{O}(n),
\]
hence, the linear form to be maximized is given as
\[
f(\alpha_{1,1}\ldots, \alpha_{2,m}) = \sum_{j=1}^m (\alpha_{1,j} + \alpha_{2,j})\big(1-\frac{1}{(G:U_j)}\big)
 + \frac{p-1}{p} \sum_{j=1}^m \alpha_{1, j} \frac{|G|\cdot|[x]\cap U_j|}{|U_j||[x]|},
\]
our aim is to show that the maximum is attained in a point satisfying
$\alpha_{1, j_0}=\alpha_{2, j_0}=1$, $\alpha_{i, j}=0$ for $j\neq j_0$
for some $j_0$. Note first that if in the second summand of the
definition of $f$ we replace all $\alpha_{1, j}$ by $\alpha_{2, j}$,
the value of $f$ does not change in view of (\ref{eq:CondAd}), and the
first summand is of the form $g(\alpha_{1, 1}, \ldots, \alpha_{1, m})
+ g(\alpha_{2, 1}, \ldots, \alpha_{2, m})$ for a linear form $g$,
thus, we may assume without loss that $\alpha_{1,j}=\alpha_{2,j}$ for
all $j$. Under this assumption (\ref{eq:CondAd}) is trivially
satisfied. Thus, we consider the maximum of a linear form on the set
$\{(\beta_1, \ldots, \beta_m): \beta_i\geq 0, \sum\beta_i = 1\}$,
which is either 0, or attained in a vertex $(0, \ldots, 0, 1, 0,
\ldots, 0)$. In the latter case our claim is proven, whereas in the
former we would have $\mu(\Gamma)=0$. However, as M\"uller
\cite{Mvirtfrei} showed, the number of free subgroups of $\Gamma$ of
index $n|G|$ is $n!^{-\chi(\Gamma)|G|+o(1)}$. In general, the Euler
characteristics of an amalgamated product $G\ast_U H$ is given by
$\frac{1}{|G|}+\frac{1}{|H|}-\frac{1}{|U|}$, and, using the assumption
$|G|\geq 3p$, we obtain the contradiction 
\[
0 = \mu(\Gamma) \geq \frac{- \chi(\Gamma)}{|G|} =\frac{1}{p|G|}-\frac{2}{|G|^2} \geq
\frac{1}{3p|G|}.
\]
\end{proof}
\begin{proof}[Proof of Proposition~\ref{prop:virtfrei}]
The Euler
characteristics of $G\ast_{C_p} G$ equals
$-\frac{1}{p}+\frac{2}{|G|}$. Set $H=C_p$ in
Proposition~\ref{prop:Amalgam} to obtain
\[
\mu(\Gamma) \geq \frac{1}{p} + \frac{|[x]\cap\langle x\rangle|(p-1)}{|[x]|p} - \frac{2p}{|G|}.
\]
Now $|[x]\cap\langle x\rangle|=(N_G(\langle x\rangle):C_G(x))$, and $|[x]|=(G:C_G(x))$, thus
$\mu(\Gamma)=-\chi(\Gamma)$ implies 
\[
\frac{(N_G(\langle x\rangle):C_G(x))(p-1)}{(G:C_G(x))p} \leq \frac{2p-2}{|G|},
\]
which is equivalent to $|N_G(\langle x\rangle)|\leq 2p$. To prove the
converse, suppose that $|N_G(\langle x\rangle)|\leq 2p$, and that the
subgroup $H$ of $G$ maximizing the right-hand side of
(\ref{eq:PropAm}) is not trivial. Since every subgroup not containing
an element conjugate to $x$ cannot maximize this expression, we have
in particular that $|H|\geq p$. Suppose first that $p\geq 3$. Then
$|N_G(\langle x\rangle)|<p^2$, thus a $p$-Sylow subgroup $P$ of $G$ is
cyclic of order $p$. For suppose otherwise. Then either $x$ is central
in $P$, which would imply $|C_G(\langle x\rangle)|\geq|P|\geq p^2$, or
$x$ is not central in $P$, in which case there is a central element
$y\in P\setminus\langle x\rangle$, which would imply $|C_G(\langle
x\rangle)|\geq|\langle x, y\rangle|\geq p^2$. Hence, in each $p$-Sylow
subgroup of $H$, there are $|[x]\cap\langle x\rangle|=(N_G(\langle
x\rangle):\langle x\rangle)$ elements conjugate to $x$. Similarly, if
$p=2$ and $|C_H(x)|$ is cyclic of order 2 or 4, there is one conjugate
of $x$ in every 2-Sylow subgroup of $H$. The number of $p$-Sylow
subgroups of $H$ is $|H|/p$ at most, thus
\[
|[x]\cap H| \leq \frac{|H|}{p}|[x]\cap \langle x\rangle| \leq \frac{|H|}{p}(N_G(\langle x\rangle):\langle x\rangle),
\]
and the right hand side of (\ref{eq:PropAm}) is bounded above by
\begin{eqnarray*}
\frac{1}{p} + \frac{|H|(N_G(\langle x\rangle):\langle x\rangle)(p-1)}{(G:C_G(\langle x\rangle)) p^2} -
\frac{2}{(G:H)}
 & \leq & \frac{1}{p} + \frac{2|H|(p-1)}{|G| p} -
\frac{2|H|}{|G|}\\
 & \leq & \frac{1}{p} + \frac{4(p-1)}{|G|} -
\frac{4p}{|G|}
\end{eqnarray*}
since $\frac{p-1}{p}<2$ and $|H|\geq2p$.

It remains to consider the case $p=2$, $C_H(x)\cong C_2\times C_2$. Since
$C_H(x)\leq N_G(\langle x\rangle)$, and $|N_G(x)|\leq 2p=4$, we deduce $N_G(\langle
x\rangle)=C_H(x)$, which implies $|C_G(x)|=4$. We therefore obtain $|[x]\cap
H|\leq |[x]| = \frac{|G|}{4}$, and with this bound the right hand side
of (\ref{eq:PropAm}) becomes
\[
\frac{1}{2} + \frac{|H|/4}{2|G|/4} - \frac{2}{(G:H)} = \frac{1}{2} -
\frac{3|H|}{2|G|} \leq \frac{1}{2} - \frac{6}{|G|} < -\chi(\Gamma),
\]
and the proposition is proven.
\end{proof}

\section{Proof of Theorem~\ref{thm:Amalgam}}

We now prove Theorem~\ref{thm:Amalgam}.
We have to consider 3 cases, depending on whether $x$ is an odd
permutation or not, and whether we consider $S_k\ast_{C_p}S_k$ or
$A_k\ast_{C_p}A_k$. Note that in the latter case $x$ is necessarily even,
which is the reason we do not have 4 cases. The computations are the
same for all these cases, we only give them here for the case
$S_k\ast_{C_p}S_k$ and $x$ even, that is, $p\neq 2$ or $p=2$ and $\ell$ even.

To apply Proposition~\ref{prop:Amalgam}, we have to consider the maximum
taken over all subgroups of $S_k$.
We claim that this maximum occurs for $H=A_{k-1}$, except for certain
small values of $k$ which lead to the exceptions in the theorem. In
this case we have $\frac{2}{S_k:A_{k-1}}=\frac{1}{k}$, and
$|[x]\cap H|$ equals the number of elements with $\ell$ $p$-cycles in
$S_{k-1}$, thus, we obtain
\[
\frac{(|[x]\cap H|)}{|[x]|} = \frac{\frac{(k-1)!}{(k-\ell p-1)!
\ell!p^\ell}}{\frac{k!}{(k-\ell p)!\ell!p^\ell}} = \frac{k-\ell p}{k},
\]
and therefore the value
\[
\frac{1}{p}+ \frac{(k-\ell p)(p-1)}{pk} - \frac{1}{k} =
1-\frac{\ell(p-1)+1}{k}. 
\]
We want to show that for every other subgroup we obtain a smaller
value, that is, we have to show that for each $H<S_k$ different from
$A_{k-1}$ we have
\[
\frac{(|[x]\cap H|)(p-1)}{|[x]|p} - \frac{2}{(S_k:H)} < \frac{(k-\ell
  p)(p-1)}{pk} - \frac{1}{k}.
\]
We will do so in four steps: First, we consider intransitive
subgroups, then transitive, but imprimitive subgroups. In both cases
we know that $H$ is contained in some direct product or some wreath
product, and these structures are simple enough to give upper bounds
for $|[x]\cap H|$. Next, we consider primitive subgroups. We know on
one hand, that a proper primitive subgroup cannot be too large, on the
other hand, we can show by direct construction that $x$ cannot have
too many fixed points, thus, $|[x]|$ has to be large. Comparing these
estimates we obtain a contradiction, provided that $n\geq 24$. Finally,
we check all primitive groups of order 24 using GAP\cite{GAP}. The
quantity $\frac{(|[x]\cap H|)}{|[x]|}$ has a probabilistic
interpretation as follows: Choose a permutation $\pi\in[x]$ at
random. Then the probability for the event $\pi\in H$ equals
$\frac{(|[x]\cap H|)}{|[x]|}$. It turns out that this interpretation
is useful for computing upper bounds.

An in transitive subgroup which does not contain
$A_{k-1}$ has an invariant set $A$ with $2\leq|A|\leq n/2$, thus we have
to estimate the probability that a permutation with $\ell$ cycles of
length $p$ leaves a given set $A$ of size $m$ invariant. Denote this
probability with $P(k, \ell, m)$. Distinguishing the possibilities that a
given $p$-cycle is totally inside or outside $A$, we obtain the
recursion relation 
\[
P(k, \ell, m) \leq \Big(\frac{m}{k}\Big)^p P(k-p, \ell-1, m-p) + 
\Big(1-\frac{m}{k}\Big)^p P(k-p, \ell-1, m),
\]
which is valid for $k\geq p$ and $\ell\geq 1$. We use this relation to prove
$P(k, \ell, m)<\frac{k-\ell p}{k}$ for all $m$ in the range $2\leq m\leq k/2$ by
induction on $k$. Inserting the induction hypothesis we have to prove
the estimate
\[
\frac{k-\ell p}{k} > \Big(\frac{m}{k}\Big)^p \frac{k-\ell p}{k-p} + 
\Big(1-\frac{m}{k}\Big)^p \frac{k-\ell p}{k-p},
\]
that is
\[
1-\frac{p}{k} > \Big(\frac{m}{k}\Big)^p + \Big(1-\frac{m}{k}\Big)^p.
\]
The expression on the right hand side is monotonically decreasing in
$m\leq k/2$, thus, its maximal value is at $m=2$, and we see that it
suffices to prove
\[
1-\frac{p}{k} > \Big(\frac{2}{k}\Big)^p + \Big(1-\frac{2}{k}\Big)^p.
\]
Suppose that $k\geq2p$. Then the terms in the binomial expansion of
$(1-2/k)^p$ are alternating and of decreasing modulus, thus we obtain
an upper bound by deleting all terms except the first three, and find
that we only have to prove
\[
1-\frac{p+1}{k} > \Big(\frac{2}{k}\Big)^p + 1-\frac{2p}{k} + \frac{2p(p-1)}{k^2}.
\]
However, bounding the first summand on the right hand side by
$\frac{4}{k^2}$, and using the assumption $k>2p$, this inequality is
seen to hold true with the possible exceptions $p=2, k=4$, which can
be checked directly. Hence, the induction 
step is correct for $k\geq2p$. For $k<2p$ we have 
$\ell=1$, in which case we compute $P(k, \ell, m)$ directly to obtain
\begin{eqnarray*}
P(k, 1, m) & = & \frac{(k-m)(k-m-1)\cdots(k-m-p+1)}{k(k-1)\cdots(k-p+1)}\\
 & \leq & \frac{(k-2)(k-3)\cdots(k-p-1)}{k(k-1)\cdots(k-p+1)}\\
 & = & \frac{(k-p)(k-p-1)}{k(k-1)}\\
 & < & \frac{k-p}{k}.
\end{eqnarray*}
Hence, for intransitive subgroups our claim is proven.

Next, we consider imprimitive subgroups. Without loss we may assume
that $H=S_a\wr S_b$, and denote by $\pi:H\to S_b$ the canonical
projection.

We begin by considering some special cases, the first being the case
$p=b$. If $x\in H$, then either some $S_k$-conjugate of $x$
contains $a$ cycles of length $p$, which contradicts the assumption
$\ell p<k$, or there are $p$ domains of imprimitivity, each of which is an
invariant set under each element of $[x]\cap H$, and we can argue as in
the case of intransitive groups. 

Next, suppose that $a=p=2$, and denote by $P(b, \ell)$ the probability
that a random element $x\in S_k$ consisting of $\ell$ cycles of length $2$
is contained in $S_2\wr S_b$. Choose a domain of imprimitivity. Then this
domain is either fixed pointwise, or a 2-cycle of $x$, or there are
two 2-cycles of $x$ mapping this domain onto another domain of
imprimitivity. These cases are mutually exclusive, thus we can compute
the probability of each single case and obtain for $\ell\geq 2$ the
recursion relation
\begin{eqnarray*}
P(b, \ell) & = & \frac{(2b-2\ell)(2b-2\ell-1)}{2b(2b-1)} P(b-1, \ell)\\
&&+ \frac{1}{2b-1}\Big(1-\frac{(2b-2\ell)(2b-2\ell-1)}{2b(2b-1)}\Big)
P(b-1, \ell-1)\\ 
&&+ \frac{1}{2b-1}\Big(1-\frac{(2b-2\ell)(2b-2\ell-1)}{2b(2b-1)}\Big)\\
&&\qquad\qquad\times
\Big(1-\frac{(2b-2\ell-2)(2b-2\ell-3)}{(2b-2)(2b-3)}\Big) P(b-2, \ell-2)
\end{eqnarray*}
where $P(b-1, \ell)$ has to be interpreted as 0 if $b=\ell$.
Estimating all probabilities on the right hand side as well as the
brackets $(1-\ldots)$ by 1, we obtain the estimate
$P(b,\ell)<\frac{(2b-2\ell)(2b-2\ell-1)+4b}{2b(2b-1)}$, which implies $P(b,\ell)<
\frac{2b-2\ell-1}{2b}$, and hence our claim, provided that
$4b\ell+1>6b+4\ell^2$, which is the case for all $\ell$ unless one of
$b\leq7$, $\ell\leq 1$ or $\ell\geq b-1$ holds true. For $\ell=1$ we immediatelly see that
$P(b, 1)=\frac{1}{2b-1}$, which is small enough for all $b\geq 3$. For
$b=\ell$ we have
\[
P(b, b) = \frac{1}{2b-1}\big(P(b-1, b-1) + P(b-2, b-2)\big), 
\]
which implies by a simple induction that $P(b, b)<\frac{1}{b^2}$ for
all $b\geq 6$. For $P(b, b-1)$ the recursion implies
\[
P(b, b-1) < \frac{1}{b(2b-1)}P(b-1, b-1) + \frac{1}{2b-1}\big(P(b-1,
b-2) + P(b-2, b-3)\big).
\]
Explicite computation implies that $P(6, 5)<0.014$ and $P(7,
6)<0.0051$, together with the bound for $P(b, b)$ we obtain by
induction that $P(b, b-1)<\frac{1}{b^2}$ for all $b\geq 6$, which is
again acceptable. Thus, our claim is proven for all $b\geq 8$, and for
the remaining values of $b$ it can be checked directly using the
recurrence relation above.

Now we embark on the general case.
Let $x\in S_k$ be an element with $\ell$ cycles of length $p$
chosen at random; we estimate the probability that $x\in H$. We
distinguish two cases, depending on whether there exists a domain of
imprimitivity which is left invariant by $x$ or not. It suffices to
show that in each case the conditional probability for the event $x\in H$
is at most $\frac{k-\ell p-1}{k}$. Suppose that $x$ moves some domain of
imprimitivity. Then $x$ has $a$
cycles of length $p$, all of which 
are mapped onto the same element under $\pi$; leaving away all other
restrictions on $x$ we obtain an upper bound for the probability in
question. We get an even coarser bound, if we add the condition that
all points in the first domain of imprimitivity are actually moved,
and only ask whether the action on these other points is actually
mapped onto the same cycle under $\pi$. We therefore obtain the upper bound
\[
\frac{(a-1)!^{p-1}}{(k-p-a+1)(k-p-a)\cdots(k-pa+1)} 
= \frac{(a-1)!^{p-1}(k-pa)!}{(k-p-a+1)!}.
\]
It suffices to show that the probability in question is
$\leq\frac{1}{k}$. Compare the $p-1$ largest factors in the numerator,
that is, $a^{p-1}$ with the $p-1$ largest factors in the denominator, that
is, $(k-p-a+1)\cdots(k-a-2p+3)$, and note that $k=ab$. Comparing these
terms we obtain a contribution $< (b-1-2p/a)^{1-p}$, continuing in
this way we find that the probability in question is at most
\[
\big((b-1-2p/a)(b-2-2p/a)\cdots(b-a-2p/a)\big)^{1-p}.
\]
All factors are at most 1, thus, to give an upper bound, we may single
out any factor easily dealt with. If $b\geq2a+4p/a$, there
is one factor $\leq a^{-1}$, and two factors $\leq2/b$, hence, the whole
product is less than $4/(ab^2)\leq 1/(ab)$. 
Next, note that there are $p-1$ factors which are less than
$\frac{1}{2}$, thus, if $2^{p-1}>ab$, we are done. In view of the
previous result we see that we may assume $p\leq a$, provided that
$a\geq9$. Again, these special cases can be excluded by looking at the
original expression; we therefore obtain the restrictions $b<2a+4, p<a$.

We now go back to the original fraction and note that in the
denominator, there are $(p-1)(a-1)$ consecutive positive integers,
hence, we obtain the upper bound
\[
\frac{(a-1)!^{p-1}}{(ap-a-p+1)!}.
\]
This expression is decreasing as $p$ increases, for $p=3$ it becomes
\[
\frac{(a-1)!^2}{(2a-2)!} = \binom{2a-2}{a-1}^{-1} < \frac{1}{2a^2+4a}
< \frac{1}{ab},
\]
provided that $a\geq 6$, the remaining cases have to be checked
individually again. Hence, we may assume $p=2$, and the probability in
question is equal to
\[
\frac{(a-1)!}{(ab-a-1)\cdots(ab-2a+1)} \leq (b-1)^{1-a},
\]
which is $\leq \frac{1}{ab}$ unless $b=2$, $b=3$ and $a\leq6$, $b=4$ and $a\leq
4$, or $a=2$. However, the special cases dealt with at the beginning
imply that $a, b\geq 3$, and we are again lead to a few special cases
which are easily excluded separately.

Hence, for imprimitive $H$ we always have $\frac{|[x]\cap H}{|[x]|} \leq
\frac{k-2\ell-1}{k}$, and we do not have to consider such groups any more.

To deal with primitive subgroups, we use the following.

\begin{Lem}
Let $H<S_k$ be a primitive subgroup, and suppose that one of the
following two conditions hold true.
\begin{enumerate}
\item $H$ contains a $q$-cycle for some prime number $q<k-4$;
\item $|H|>3^k$ or $|H|>2^k$ and $k\geq 25$.
\end{enumerate}
Then $H$ is $A_n$ or $S_n$.
\end{Lem}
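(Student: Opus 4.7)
Both parts of the lemma are classical facts about primitive permutation groups, and my plan is to derive them from standard results in the literature rather than reprove them from scratch.

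Part (1) is an immediate application of Jordan's theorem (1871), which states that a primitive subgroup of $S_k$ containing a cycle of prime length $q$ with $q\leq k-3$ must contain $A_k$. The hypothesis $q<k-4$ gives $q\leq k-5\leq k-3$, so Jordan's hypothesis holds and one concludes $H\supseteq A_k$, hence $H\in\{A_k,S_k\}$. I would simply quote this theorem from a reference such as Wielandt's \emph{Finite Permutation Groups} or Dixon--Mortimer's \emph{Permutation Groups}.

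For part (2), my plan is to invoke an upper bound for the order of a primitive subgroup of $S_k$ not containing $A_k$. Mar\'oti's theorem, refining earlier work of Bochert, Wielandt and Praeger--Saxl, asserts that any such $H$ is either one of the four Mathieu groups $M_{11},M_{12},M_{23},M_{24}$, or a subgroup of a wreath product $S_m\wr S_r$ acting in product action on $\binom{m}{\ell}^r$ points, or satisfies $|H|<k^{1+\lfloor\log_2 k\rfloor}$. In the generic third case the upper bound is dominated by $2^k$ for $k\geq 25$ (and by $3^k$ for all $k$ in the application), so the hypotheses of (2) force $H\supseteq A_k$.

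The main obstacle is disposing of the exceptional list. The Mathieu groups act on at most $24$ points and have orders well below $3^{18}$, so they are irrelevant for the second bound and easily fail the first. The wreath-product case reduces to comparing $(m!)^r r!$ against an exponential in the degree $\binom{m}{\ell}^r$; since $\binom{m}{\ell}$ grows rapidly with $m$, the bound is easily beaten outside a finite range. The remaining primitive affine groups $H\leq AGL_d(q)$ on $q^d$ points satisfy $|H|\leq q^{d(d+1)}$, again dominated by the exponential threshold once $q^d\geq 25$. A finite residue of small-degree cases can then be checked directly using the primitive group library of GAP, consistent with the computational style used elsewhere in the paper for degree $24$.
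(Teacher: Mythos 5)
Your proposal is correct and matches the paper's own (very brief) proof, which likewise disposes of part (1) by citing Jordan's theorem in the form given in Huppert, \emph{Endliche Gruppen}, II.~Satz~4.5, and part (2) by citing Mar\'oti's bound on the orders of primitive groups not containing $A_k$. Your additional discussion of the exceptional cases in Mar\'oti's trichotomy is essentially a reconstruction of his Corollary deducing the $3^k$ and $2^k$ bounds, so it adds detail but not a different method.
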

\begin{proof}
The first condition is due to (confer \cite[II. Satz 4.5]{Huppert}),
whereas the second was proven by Mar\'oti\cite{Maroti}.
\end{proof}

Suppose that $H<S_n$ is primitive, not equal to $S_n$ or $A_n$, and
that $\frac{|[x]\cap H|}{|[x]|}\geq\frac{k-\ell p-1}{k}$. Define an equivalence
relation on $[x]$ as follows: Two elements $y, z$ are equivalent, if the $\ell$
sets of $p$ points in the $p$-cycles of $y$ and $z$ are equal, and if
the action of $y$ and $z$ is the same on each of these sets with the
exception of the one containing the least number moved by $y$ and
$z$, and on this set, there is some $i$ such that the action of $y^i$
is equal to the action of $z$; that is, $y$ and $z$ share $\ell-1$
$p$-cycles, and move the same points, and on the $p$ points on which
they differ, one is a power of the other. Suppose that
$\frac{k-\ell p-1}{k}>\frac{1}{p-1}$. Since every equivalence class
contains $p-1$ elements of $[x]$, there is one class containing at
least 2 elements of $[x]\cap H$. However, their quotient is a single
$p$-cycle, thus $H$ contains a $p$-cycle, which is only possible for
$p\geq k-4$, contradicting our assumption for $k\geq 7$. Hence, we either
have $k\leq 6$ or $\frac{k-\ell p-1}{k}\leq\frac{1}{p-1}$. For $p=2$ this
argument does not apply, while for $p=3, 5$ it is not very
efficient. For these cases we argue as follows. In $S_5$, the only
even permutations which do not consist of a single cycle of prime
length are the identity and the permutations conjugate to
$(12)(34)$. We call a set $A$ of elements in $S_5$ admissible, if the
subgroup generated by all elements $\sigma\tau^{-1}, \sigma, \tau\in
A$ are of this form, that is, if there is some conjugate of the Klein
four group containing all elements $\sigma\tau^{-1}$. By direct
inspection we find that an admissible set consisting of elements
conjugate to $(12)(34)$ contains at most 3 elements, an admissible set
of 3-cycles contains at most 4 elements, and an admissible set of
5-cycles contains at most 2 elements. We now form subsets of $[x]$ in
the following way: For $p=2$ we choose 5 points 
$\{a_1, \ldots, a_5\}$, and $\ell-2$ pairs of points $(x_1, y_1),
\ldots, (x_{\ell-2}, y_{\ell-2})$, such that all points $a_i, x_i,
y_i$ are distinct, and consider the set of all permutations which
interchange $x_i$ and $y_i$, and act as the product 
of two transpositions on $\{a_1, \ldots, a_5\}$. Similarly, for $p=3,
5$ we choose five points and $\ell-1$ cycles of length $p$, and form
the set of all permutations which act as prescribed on the $\ell-1$
cycles, and as an arbitrary cycle of length $p$ on $\{a_1, \ldots,
a_5\}$. We do so for all choices of the five points and the
cycles. The number $N$ of elements in one subset equals the number of
permutations conjugate to $(12)(34)$, $(123)$ or $(12345)$,
respectively, in $S_5$, that is, it equals 15, 20, or 24. On average,
the proportion of elements contained in $H$ among all elements in a
subset is the same as the proportion of elements in $H$ among all of
$[x]$, thus, there exists some subset containing at least
$\frac{|[x]\cap H|}{|[x]|}\cdot N$ elements in $H$. The quotient of
two elements in one subset moves at most 5 points, and if this
quotient acts as a cycle of prime length, we obtain $k\leq 9$ or that
$H$ is one of $A_n$, $S_n$. In other words, if $H\neq A_n, S_n$, the
restriction of the action of a subset to the five-point set is
admissible, and using the bounds for the size of admissible sets we
obtain $\frac{|[x]\cap H|}{|[x]|}\leq \frac{1}{5}$ for $p=2, 3$, and
$\frac{|[x]\cap H|}{|[x]|}\leq \frac{1}{12}$ for $p=5$, and therefore
the same bounds for $\frac{k-\ell p-1}{k}$.

Now suppose that $k> 24$. Then we have $|H|\leq 2^k$, and $|[x]\cap
H|\geq\frac{|[x]|}{k}$. As a function of $\ell$, $|[x]|$ is unimodal, thus
we have
\[
|[x]| \geq \min\Big(\frac{k!}{(k-p\ell_0)!\ell_0!
  p^{\ell_0}},\frac{k!}{(k-p\lfloor k/p\rfloor)!\lfloor k/p\rfloor! p^{\lfloor k/p\rfloor}} \Big), 
\]
where
\[
\ell_0=\begin{cases} \left\lceil\frac{(p-2)k-p+1}{p(p-1)}\right\rceil,
  & p\geq 7,\\
\left\lceil\frac{11k}{12}\right\rceil-1, & p=5\\
 \left\lceil\frac{4k}{5}\right\rceil-1, & p=2, 3
\end{cases}
\]
is the lower bound for $\ell$ obtained above. We therefore obtain the
inequality
\[
2^k\geq|H|\geq|[x]\cap
H|\geq\frac{|[x]|}{k} \geq \frac{1}{k}\min\Big(\frac{k!}{(k-p\ell_0)!\ell_0!
  p^{\ell_0}},\frac{k!}{(k-p\lfloor k/p\rfloor)!\lfloor k/p\rfloor! p^{\lfloor k/p\rfloor}} \Big),
\]
and a simple computation shows that this inequality does not hold true
for any $k\geq 25$.

Finally, for $k\leq 50$ the primitive groups were tabulated by
Sims\cite{Sims} and are readily available under a computer algebra
system such as GAP\cite{GAP}. There are 154 primitive groups of degree
$\leq 24$ which have to be checked individually. This work is feasible,
since among these groups there are 45 symmetric or alternating groups,
and most other groups can be discarded using the trivial estimate
$|[\pi]\cap G|\leq G$. There remain only the cases $C_2\ltimes C_5$ acting
on 5 points, and $PSL_3(\F_2)$ acting on 7 points, which lead to the
exceptions mentioned in the theorem.

\section{Computational examples}
In this section we prove Propositions~\ref{Prop:Example} and
\ref{Prop:Example2}, beginning with the former.

The following table lists the subgroups of $S_3$ together with the
number of fixed points of $(12)$ and $(123)$ under the coset action.

\begin{tabular}{c|c|c|c}
Generators & Order & Fixpoints of $(12)$ & Fixpoints of $(123)$\\ \hline
$(12), (13)$ & 6 & 1 & 1\\
$(123)$ & 3 & 0 & 2\\
$(12)$ & 2 & 1 & 0\\
$(13)$ & 2 & 1 & 0\\
$(23)$ & 2 & 1 & 0\\
id & 1 & 0 & 0
\end{tabular}

Similarly, for $D_{10}$, denoting by $\tau_1, \ldots, \tau_5$ the reflections, we
have the following. 

\begin{tabular}{c|c|c|c}
Generators & Order & Fixpoints of $\tau_1$ & Fixpoints of $\tau_2$\\ \hline
$\tau_1, \tau_2$ & 10 & 1 & 1\\
$\tau_1\tau_2$ & 5 & 0 & 0\\
$\tau_1$ & 2 & 1 & 1\\
$\tau_2$ & 2 & 1 & 1\\
$\tau_3$ & 2 & 1 & 1\\
$\tau_4$ & 2 & 1 & 1\\
$\tau_5$ & 2 & 1 & 1\\
id & 1 & 0 & 0
\end{tabular}

Denote by $x_1$, $x_2$, $x_4$ the number of orbits, on which one
symmetric group acts similar to the action on the cosets of $S_3$,
$A_3$, and $1$, respectively, and let $x_3$ be the number of orbits on
which the action is similar to one of the coset actions of one of the
2-element subgroups. Similarly, denote by $y_1, \ldots, y_4$ the number of
orbits for the other symmetric group. Finally, denote by $z_1, z_2,
z_4$ the number of orbits on which the action of $D_{10}$ is similar
to the action on the cosets of $D_{10}$, $C_5$ and $1$, respectively,
and by $z_3$ the number of orbits of size 2.

Then we obtain the following conditions:
\begin{eqnarray*}
x_1, x_2, x_3, x_4, y_1, y_2, y_3, y_4, z_1, z_2, z_3, z_4 & \geq & 0\\
x_1 + 2 x_2 + 3x_3 + 6x_4 & = & 1\\
y_1 + 2 y_2 + 3y_3 + 6y_4 & = & 1\\
z_1 + 2 z_2 + 5z_3 + 10z_4 & = & 1\\
x_1+2x_2 & = & y_1+2y_2\\
x_1+x_3 & = & z_1+z_3\\
y_1+y_3 & = & z_1+z_3
\end{eqnarray*}
The linear form to be maximized is
\begin{multline*}
\ell(x_1, x_2, x_3, x_4, y_1, y_2, y_3, y_4, z_1, z_2, z_3, z_4) =\\
-(x_1 + x_2 + x_3 + x_4 + y_1 + y_2 + y_3 + y_4 + z_1 + z_2 + z_3 + z_4)\\
+ (x_1 + 2x_2 + 2x_3 + 3x_4) + (y_1 + 2y_2 + 2y_3 + 3y_4) + (z_1 +
2z_2 + 3z_3 + 5z_4)
\end{multline*}
Since the domain described by the equations and inequalities is
convex, and this domain as well a sthe linear form $ell$ is symmetric
with respect to the reflection $x_i\leftrightarrow y_i$, we see that the maximum of
$\ell$ on this domain is the same as the maximum under the additional
constraint $x_i=y_i$. Therefore, we obtain the new system
\begin{eqnarray*}
x_1, x_2, x_3, x_4, z_1, z_2, z_3, z_4 & \geq & 0\\
x_1 + 2 x_2 + 3x_3 + 6x_4 & = & 1\\
z_1 + 2 z_2 + 5z_3 + 10z_4 & = & 1\\
x_1+x_3 & = & z_1+z_3\\
\end{eqnarray*}
with linear form 
\[
\ell'(x_1, x_2, x_3, x_4, z_1, z_2, z_3, z_4) =
2x_2 + 2x_3 + 4x_4 + z_2 + 2z_3 + 4z_4
\]
Increasing $x_2$ by $3\epsilon$, and decreasing $x_4$ by $\epsilon$ does not change
the conditions, but increases the value of $\ell'$ by $2\epsilon$, hence,
in the optimum we have $x_4=0$, and similarly we deduce $z_4=0$. If
$x_1$ and $x_2$ are increased by $\epsilon$, and $x_3$ is decreased by $\epsilon$,
the equations remain valid, and the linear form is not changed, thus,
we may assume that $x_3=0$. Similarly, if we increase $z_1$ by $\epsilon$,
$z_2$ by $2\epsilon$ and decrease $z_3$ by $\epsilon$, the value of $\ell'$ does
not change, and we obtain $z_3=0$, thus, $x_1=z_1$. We therefore
obtain the system
\begin{eqnarray*}
x_1+2x_2 & = & 1\\
x_1+2z_2 & = & 1\\
\end{eqnarray*}
with linear form
\[
\ell''(x_1, x_2, z_2) = 2x_2+z_2,
\]
which has obviously the optimal value $\mu= \frac{3}{2}$.

We now compute $\mu(S_4\ast_{C_2\times C_2}S_4)$ with amalgamation as described
in Proposition~\ref{Prop:Example2}.

We first give a complete list of subgroups of $S_4$ together with the
induced actions of the two subgroups. Let $\tau_1, \ldots, \tau_5$ be the
transitive actions of $C_2\times C_2$, where $\tau_1$ is trivial, $\tau_5$ is
regular, $\tau_2$ is the action on the cosets of $\{1, (12)(34)\}$, $\tau_3$
corresponds to $\{1, (13)(24)\}$ respectively $\{1, (12)\}$, and $\tau_4$
corresponds to $\{1, (14)(23)\}$ respectively $\{1, (34)\}$. We can
simplify our computations, since we are only dealing with one vertex
group, it is easy to see that in this case the induced action only
depends on the conjugacy class of the subgroup, not on the subgroup
itself. Thus, we only have to consider 11 distinct representations of
$S_4$, the relevant information is contained in the following table.

\begin{tabular}{l|c|c|c|c|c|c|c|c|c|c|c|c}
 & Subgroup & Index & \multicolumn{5}{c|} {Representation of $U_1$} &
 \multicolumn{5}{c} {Representation of $U_2$}\\ \hline
 & (Name or generators)& &$\tau_1$ & $\tau_2$ & $\tau_3$ & $\tau_4$ & $\tau_5$ &
 $\tau_1$ & $\tau_2$ & $\tau_3$ & $\tau_4$ & $\tau_5$ \\ \hline 
$\rho_1$ & $S_4$ & 1 & 1 & 0 & 0 & 0 & 0 & 1 & 0 & 0 & 0 & 0\\
$\rho_2$ & $A_4$ & 2 & 2 & 0 & 0 & 0 & 0 & 0 & 1 & 0 & 0 & 0\\
$\rho_3$ & (12), (13)(24) & 3 & 3 & 0 & 0 & 0 & 0 & 1 & 1 & 0 & 0 & 0\\
$\rho_4$ & (12)(34), (13)(24) & 6 & 6 & 0 & 0 & 0 & 0 & 0 & 3 & 0 & 0 &
0\\
$\rho_5$ & (12), (123) & 4 & 0 & 0 & 0 & 0 & 1 & 0 & 0 & 1 & 1 & 0\\
$\rho_6$ & (1234) & 6 & 0 & 1 & 1 & 1 & 0 & 0 & 1 & 0 & 0 & 1\\
$\rho_7$ & (12), (34) & 6 & 0 & 1 & 1 & 1 & 0 & 2 & 0 & 0 & 0 & 1\\
$\rho_8$ & (123) & 8 & 0 & 0 & 0 & 0 & 2 & 0 & 0 & 0 & 0 & 2\\
$\rho_9$ & (12)(34) & 12 & 0 & 2 & 2 & 2 & 0 & 0 & 2 & 0 & 0 & 2\\
$\rho_{10}$ & (12) & 12 & 0 & 0 & 0 & 0 & 6 & 0 & 0 & 2 & 2 & 1\\
$\rho_{11}$ & $\mathrm{id}$ & 24 & 0 & 0 & 0 & 0 & 6 & 0 & 0 & 0 & 0 & 6
\end{tabular}

From these table we find that our linear problem has 22 variables
$x_1, \ldots, x_{11}, y_1\ldots, y_{11}$, corresponding to the 11 representation
of each factor, and the equations
\begin{eqnarray*}
\sum_{i=1}^{11} n_i x_i & = & 1\\
\sum_{i=1}^{11} n_i y_i & = & 1\\
x_1+2x_2+3x_3+6x_4 & = & y_1+y_3+2y_7\\
x_6+x_7+2x_9 & = & y_2+y_3+3y_4+y_6+2y_9\\
x_6+x_7+2x_9 & = & y_5+2y_{10}\\
x_5+2x_8+6x_{10}+6x_{11} & = & y_6+y_7+2y_8+2y_9+y_{10}+6y_{11}
\end{eqnarray*}
where $n_i$ is the number of points $\rho_i$ acts upon. The linear form
to be optimized is
\begin{multline*}
\ell(x_1, \ldots, x_{11}, y_1, \ldots, y_{11}) = - \sum_{i=1}^{11} x_i - \sum_{i=1}^{11}y_i + \\
x_1+2x_2+3x_3+6x_4 + x_5+2x_6+2x_7+2x_8+4x_9+6x_{10}+6x_{11}.
\end{multline*}
Solving this linear program we find that the optimum is attained in
the point $x_4 = \frac{1}{18}, x_{11}=\frac{1}{36}$, $y_7=\frac{1}{6}$,
and all other variables equal to 0. The value of the form in this
point is $\frac{1}{4}$, thus, we find $\mu(\Gamma)=\frac{1}{4}$.

The computation of the Euler characteristics is straightforward in
both cases.

Note that in the second example the optimising tuple contains two
non-zero values $x_i$, that is, different from the case of free
products of finite groups it is not true that for almost all homomorphisms
$\varphi:\Gamma\rightarrow S_n$ the restriction to a finite subgroup $U$
decomposes into orbits on almost all of which $U$ acts in the same
way. This observation makes the existence of a simple relation between
$\mu(\Gamma)$ and group theoretic data of $\Gamma$ rather unlikely.

\end{document}